\providecommand{\XLeftMargin}{2.5cm}
\providecommand{\XTopMargin}{2.5cm}
\providecommand{\XRightMargin}{2.5cm}
\providecommand{\XBottomMargin}{2.5cm}
\providecommand{\XLeftMargin}{2cm}
\providecommand{\XTopMargin}{1.8cm}
\providecommand{\XRightMargin}{2cm}
\providecommand{\XBottomMargin}{1.8cm}
\newlength\XXXMyLength\makeatletter
\def\Xadjustleft[#1]{\setlength\XXXMyLength{#1}\ifnum\numexpr\leftmargin>\numexpr\XXXMyLength\hspace{-\XXXMyLength}\else\hspace{-\leftmargin}\fi}
\theoremstyle{plain}
\newtheorem{thm}{Theorem}[section]
\newtheorem*{thm*}{Theorem}
\newtheorem{lemma}[thm]{Lemma}
\newtheorem*{lem*}{Lemma}
\newtheorem{prop}[thm]{Proposition}
\newtheorem*{prop*}{Proposition}
\newtheorem*{claim}{Claim}
\newtheorem{cor}[thm]{Corollary}
\newtheorem*{cor*}{Corollary}
\newtheorem*{conj*}{Conjecture}
\theoremstyle{definition}
\newtheorem{cons}[thm]{Construction}
\newtheorem*{cons*}{Construction}
\newtheorem*{df*}{Definition}
\newtheorem{nota}[thm]{Notation}
\newtheorem*{nota*}{Notation}
\newtheorem*{qu*}{Question}
\newtheorem{rmk}[thm]{Remark}
\newtheorem*{rmk*}{Remark}
\newtheorem*{ex*}{Example}
\newcommand{\bC}{\mathbb{C}}
\newcommand{\bG}{\mathbb{G}}
\newcommand{\bQ}{\mathbb{Q}}
\newcommand{\bR}{\mathbb{R}}
\DeclareMathOperator{\End}{End}
\DeclareMathOperator{\Res}{Res}
\DeclareMathOperator{\Tr}{Tr}
\DeclareMathOperator{\Gl}{GL}
\DeclareMathOperator{\Sl}{SL}
\DeclareMathOperator{\SO}{SO}
\DeclareMathOperator{\UU}{U}
\DeclareMathOperator{\SU}{SU}
\DeclareMathOperator{\SP}{Sp}
\DeclareMathOperator{\Orth}{O}
\DeclareMathOperator{\GSpin}{GSpin}
\newcommand{\injects}{\hookrightarrow}
\def\sumprime{\mathop{\sum{\raise3pt\hbox{${}'$}}}}
\def\revddots{\mathinner{\mkern1mu\raise\p@
\vbox{\kern7\p@\hbox{.}}\mkern2mu
\raise4\p@\hbox{.}\mkern2mu\raise7\p@\hbox{.}\mkern1mu}}
\newcommand{\floor}[1]{\left\lfloor #1 \right\rfloor}
\newcommand{\comment}[1]{}
\newcommand{\GL}{\Gl}
\newcommand{\SL}{\Sl}
\newcommand{\Sp}{\SP}
\begin{document}

\title[Sub-Shimura Varieties for type $O(2,n)$]{Sub-Shimura Varieties for type $O(2,n)$}
\author{Andrew Fiori}

\address{Mathematics \& Statistics
612 Campus Place N.W.
University of Calgary
2500 University Drive NW
Calgary, AB, Canada
T2N 1N4}
\email{andrew.fiori@ucalgary.ca}
\thanks{A.F. gratefully acknowledge support from the Pacific Institute for Mathematical Sciences (PIMS)}

\begin{abstract}
We give a classification, up to consideration of component groups, of sub-Shimura varieties of those Shimura Varieties attached to orthogonal groups of signature $(2,n)$ over $\bQ$.
\end{abstract}

\keywords{Shimura Varieties, Cycles}

\subjclass[2010]{}

\maketitle

\section{Introduction}
\label{sec:intro}

The purpose of this article is to give a complete classification of the connected components of sub-Shimura varieties of the Shimura varieties associated to quadratic forms of signature $(2,n)$.
These sub-Shimura varieties are important algebraic cycles in the Chow ring and often have important applications. For example, Heegner cycles have an important role in the theory of Borcherds lifts (See for example \cite{Borcherd_inv}, \cite{Brunier_BP}, or \cite{KudlaCycles}) and the various reinterpretations of the theory of singular moduli (See for example \cite{grosszagier}, \cite{BruinierYang1}, \cite{bruinierkudlayang}, \cite{kudlarapoport} \cite{AndreattaGorenetal}).

This sort of problem has been looked at for other Shimura varieties. In particular, the work of \cite{SatakeHolomorphic} and \cite{AbdulaliTwists} looked at the sub-Shimura varieties of the Shimura varieties associated to Symplectic groups.

The main result of this paper is Theorem \ref{thm:mainresult} which completely characterizes the possible sub-Shimura varieties of the Shimura varieties attached to orthogonal groups of signature $(2,n)$ up to considerations of component groups. 

This paper is organized as follows:
\begin{itemize}
\item In Section \ref{sec:sym} we use the theory of Hermitian symmetric spaces and real lie groups to deduce information about the possible sub-Shimura varieties.
\item In Section \ref{sec:Shim} we apply the theory of algebraic groups over $\bQ$ to obtain further information about the groups associated to the sub-Shimura varieties and prove the main theorem.
\item In Section \ref{sec:Res} we state a natural generalization of our result to certain base changes from totally real fields.
\item In Section \ref{sec:Conc} we briefly discuss several problems that this work leaves open.
\end{itemize}

\section{Hermitian Symmetric Spaces}
\label{sec:sym}

In this section we will be working over $\bR$ and $\bC$ using the connection between Shimura varieties and Hermitian symmetric domains. We refer the reader to \cite{Helgason} and \cite{Milne_shimura} for the relevant background on the theory of Hermitian symmetric spaces and the connection to Shimura varieties. The key feature of this connection we shall use is that maps of Shimura varieties induce maps of Hermitian symmetric domains and that in both cases, the maps come from a map between associated algebraic groups.

The key feature of the case of orthogonal Hermitian symmetric spaces we shall use is that the relevant groups have real rank at most $2$ and that the vast majority of irreducible representations of simple non-compact real lie groups cannot land in an orthogonal group with such small real rank. 

Simple Hermitian symmetric spaces of the non-compact type are completely classified.
From the following classification we can already rule out sub-Shimura varieties of most types based simply on the real rank of the associated group. Ultimately we hope to determine for which $H$ below there can exist a map $\rho:H\rightarrow \SO(2,n)$ which induces a non-trivial map of Hermitian symmetric spaces.
\begin{center}
List of simple Hermitian symmetric spaces of non-compact type

\setlength\tabcolsep{2.5pt}.
\begin{tabular}{ | c | c | c | c }
\hline
	Group & Conditions & Real Rank  \\
\hline
     $\SU(p,q)$ & $1 \leq p \leq q$  &  $ p$ \\
     $\SO(2,n)$ & $n > 4 $  &   $2$ \\
     $\SO^\ast(n)$ & $n > 4$ &  $\floor{n/2}$ \\
     $\Sp(2n)$&   &  $n \ge 1$ \\
     $E_6$ & & $2$  \\
     $E_7$ &  & $3$ \\
\hline
\end{tabular}
\end{center}

\begin{rmk}
We have the following remarks about the classification:
\begin{itemize}
\item The group $\SO^\ast(1)$ is isomorphic to $\SO(2)$ which is compact.
\item $\SO^\ast(2)$ is not simple, the spin cover is associated to the Shimura curve over a totally real quadratic extension which is definite at one place and indefinite at the other. 
\item The spin cover of $\SO^\ast(3)$ is $\SU(1,3)$, and these give the same symmetric spaces. 
\item The spin cover of $\SO^\ast(4)$ is the same as that of $\SO(2,6)$, and gives the same symmetric space. 

\item We have that $\SU(2,2) \sim \SO(2,4)$ give the same symmetric space.
\item We have that $\Sp(4) \sim \SO(2,3)$ give the same symmetric space.
\item We have that $\SO(2,2) \sim \Sp(2)\times \Sp(2)$ is not simple, and give the same symmetric space.
\item We have that $\Sp(2) = \SU(1,1) \sim \SO(2,1)$ all give the same symmetric space.
\end{itemize}
\end{rmk}

The following Lemma gives us a stronger criterion than rank to characterize which real orthogonal groups a given representation can factor through.
\begin{lemma}
Let $\chi:\bG_m \rightarrow H$ be a cocharacter, let $\rho : H\injects \GL(V)$ be any representation.

Let $V^\infty$ be the subspace such that for $v\in V^\infty$ we have 
\[ \lim_{t\rightarrow \infty} \rho \circ \chi(t) (v)= 0. \]
If there exists a quadratic form on $V$ whose orthogonal group contains the image of $H$ its rank is at least $\lambda(\rho,\chi) = \dim(V^{\infty})$
\end{lemma}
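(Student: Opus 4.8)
The plan is to read off the weight-space decomposition of $V$ determined by $\chi$, to show that $V^\infty$ is forced to be a totally isotropic subspace for \emph{any} $H$-invariant quadratic form, and then to invoke the fact that the rank in question --- the real rank of $\Orth(V,Q)$, equivalently the Witt index of $Q$ --- is exactly the largest dimension of a totally isotropic subspace.

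First I would compose to obtain the cocharacter $\rho\circ\chi:\bG_m\to\GL(V)$ and decompose $V=\bigoplus_{n\in\bZ}V_n$ into its weight spaces, where $\rho\circ\chi(t)$ acts on $V_n$ by multiplication by $t^n$. Since $\rho\circ\chi(t)(v)=t^n v$ for $v\in V_n$, the limit condition $\lim_{t\to\infty}\rho\circ\chi(t)(v)=0$ holds precisely when $n<0$; hence $V^\infty=\bigoplus_{n<0}V_n$ and $\lambda(\rho,\chi)=\sum_{n<0}\dim V_n$.

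Next, suppose $Q$ is a quadratic form on $V$ with associated symmetric bilinear form $B$ and with $\rho(H)\subseteq\Orth(V,Q)$. Because $\chi(t)\in H$, the operator $\rho\circ\chi(t)$ preserves $B$, so for $u\in V_m$ and $v\in V_n$ we get $t^{m+n}B(u,v)=B(\rho\circ\chi(t)u,\,\rho\circ\chi(t)v)=B(u,v)$ identically in $t$. As $t^{m+n}$ is non-constant unless $m+n=0$, this forces $B(u,v)=0$ whenever $m+n\neq 0$; that is, $V_m\perp V_n$ unless $n=-m$. In particular any two vectors of strictly negative weight are orthogonal, so $V^\infty$ is totally isotropic. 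The rank of $Q$ is then at least $\dim V^\infty=\lambda(\rho,\chi)$, since the real rank of $\Orth(V,Q)$ is the maximal dimension of a totally isotropic subspace (its Witt index), which dominates $\dim V^\infty$.

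The only point needing care is the interpretation of ``rank'': it must be taken as the real rank / Witt index rather than the matrix rank of $B$, since for a degenerate (for instance identically zero) form the matrix rank can be smaller than $\dim V^\infty$ while $\Orth(V,Q)$ still contains $\rho(H)$, so the matrix-rank reading would be false. With the Witt-index reading the bound is immediate from the isotropy of $V^\infty$, and I expect the lemma to be applied by choosing $\chi$ so as to make $\lambda(\rho,\chi)$ large enough to exceed $2$, thereby obstructing any nontrivial map into $\SO(2,n)$.
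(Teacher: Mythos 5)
Your proof is correct; note that the paper states this lemma without giving any proof, and your argument --- decompose $V=\bigoplus_n V_n$ into $\chi$-weight spaces, observe $V^\infty=\bigoplus_{n<0}V_n$, use invariance of the bilinear form to get $V_m\perp V_n$ unless $m+n=0$ so that $V^\infty$ is totally isotropic, and conclude via the Witt index --- is exactly the standard argument the lemma evidently intends, consistent with Proposition \ref{prop:LieT} (up to the sign convention on the pairing $(\alpha,\chi)$). Your reading of ``rank'' as the Witt index, equivalently the real rank of $\Orth(V,Q)$, is also the right one: it is the only interpretation compatible with how the lemma is deployed against the real-rank-$2$ constraint for $\SO(2,n)$ throughout Section \ref{sec:sym}.
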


The following Lie theoretic description of the quantity $\lambda(\rho,\chi)$ will provide a simpler method for establishing lower bounds on it.
\begin{prop}\label{prop:LieT}
Let $H$ be the a real form, with real rank at least one, of a simple lie group with complex root datum $(X,X^\vee,\Phi,\Phi^\vee)$.
Let $\tilde{\chi}$ be the element of the cocharacter lattice $X^\vee$ associated to map $\chi:\bG_m \rightarrow H$ defined over $\bR$.
Let $\rho$ be any representation of $H$ defined over $\bR$.

Then $\lambda(\rho,\chi)$ is precisely the number of weights $\alpha$ (counted with multiplicity) appearing in $\rho$ for which $(\alpha,\chi) > 0$.
\end{prop}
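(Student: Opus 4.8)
The plan is to reduce the computation of $\dim V^\infty$ to a weight-space decomposition with respect to a maximal torus containing the image of $\chi$, under which the defining limit condition on $V^\infty$ becomes a sign condition on the pairing $(\alpha,\chi)$. The whole statement is really a bookkeeping translation between an analytic condition (a limit of linear operators) and a combinatorial one (signs of pairings of weights against a cocharacter), so the work is to set up the dictionary carefully.

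First I would extend scalars to $\bC$. Since $H$ has real rank at least one, a nontrivial $\chi$ defined over $\bR$ has image a split subtorus of $H$; I would choose a maximal torus $T$ of $H_\bC$ whose character/cocharacter lattices are the given $(X,X^\vee)$ and which contains the image of $\chi_\bC$, so that $\tilde\chi\in X^\vee$ is the cocharacter of the statement and the pairing $(\alpha,\chi)=\langle\alpha,\tilde\chi\rangle$ is the intended one. Because $T$ is a torus, the complexified representation $V_\bC=V\otimes_\bR\bC$ decomposes as $V_\bC=\bigoplus_\alpha V_\alpha$ into $T$-weight spaces indexed by the weights $\alpha\in X$ occurring in $\rho$, each appearing with multiplicity $\dim_\bC V_\alpha$; the right-hand side of the proposition is by definition $\sum_{(\alpha,\chi)>0}\dim_\bC V_\alpha$.

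Next I would compute the action of $\chi$ explicitly and take the limit. For $v\in V_\alpha$ one has $\rho\circ\chi(t)(v)=t^{(\alpha,\chi)}v$, so grouping the weights by the integer $m=(\alpha,\chi)$ yields a $\bZ$-grading $V_\bC=\bigoplus_m V_\bC^{(m)}$ on which $\chi(t)$ acts by the scalar $t^m$. Since these eigenvalues are distinct and real for real $t$, the projectors onto the $V_\bC^{(m)}$ are defined over $\bR$, so the grading descends to $V=\bigoplus_m V^{(m)}$ with $V_\bC^{(m)}=V^{(m)}\otimes_\bR\bC$; this is the point that lets real and complex dimensions agree. On $V^{(m)}$ the operator $\rho\circ\chi(t)$ equals $t^m\cdot\id$, which tends to $0$ as the real parameter runs to the relevant end precisely when $m$ has the sign dictated by the conventions of the defining limit, i.e. $m=(\alpha,\chi)>0$. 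Hence $V^\infty=\bigoplus_{(\alpha,\chi)>0}V^{(m)}$ and
\[ \lambda(\rho,\chi)=\dim_\bR V^\infty=\sum_{(\alpha,\chi)>0}\dim_\bC V_\alpha, \]
which is exactly the number of weights of $\rho$, counted with multiplicity, satisfying $(\alpha,\chi)>0$.

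I expect the main obstacle to be entirely in the bookkeeping of the two choices just flagged, not in any deep input. Concretely, one must verify that $\chi_\bC$ really does land in a maximal torus with cocharacter lattice $X^\vee$ so that $(\alpha,\chi)$ is computed as the honest pairing against $\tilde\chi$, and one must pin down the orientation convention so that the direction of the limit $\lim_{t\to\infty}$ in the definition of $V^\infty$ lines up with the sign ``$>0$'' in the statement rather than ``$<0$''. Once those conventions are fixed, the argument is the elementary eigenspace computation above, with the only substantive check being that the $\bR$-grading $V=\bigoplus_m V^{(m)}$ genuinely underlies the complex weight decomposition, which holds because the $t^m$ are distinct real eigenvalues.
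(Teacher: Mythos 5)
Your proof is correct, and it is the standard argument: the paper in fact states Proposition \ref{prop:LieT} without any proof, treating it as routine Lie theory, so your weight-space computation is precisely the argument being implicitly invoked rather than an alternative to it. The one point worth making explicit is the sign issue you flag at the end: on the weight space for $\alpha$ one has $\rho\circ\chi(t)\,v = t^{(\alpha,\chi)}v$, so with the literal limit $\lim_{t\to\infty}$ in the paper's definition of $V^\infty$ the vanishing condition is $(\alpha,\chi)<0$, not $(\alpha,\chi)>0$ as stated; this is either a sign slip in the paper or an opposite pairing convention. It is harmless for everything downstream, since $\lambda(\rho)$ is defined as the maximum over all cocharacters and replacing $\chi$ by $\chi^{-1}$ interchanges the two sign conditions, so the counts agree at the level of the maximum --- but you should state which convention you adopt rather than leave it as an unresolved ``obstacle,'' since otherwise your final displayed equality is off by exactly this substitution. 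Your descent argument (that the $\bZ$-grading is defined over $\bR$ because the eigenvalues $t^m$ are distinct and real for a fixed real $t\neq\pm1$, so $\dim_\bR V^{(m)} = \dim_\bC V^{(m)}_\bC$) is the right way to handle the only genuinely delicate step, namely matching the real dimension of $V^\infty$ with the complex weight multiplicities.
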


Denote by $\lambda(\rho)$ the maximum value of $\lambda(\rho,\chi)$ as we run over all $\chi$.

In light of Proposition \ref{prop:LieT}, we have the following trivial lower bound on $\lambda(\rho)$.
\begin{prop}
If the group $H$ is simple then the quantity $\lambda(\rho)$ is bounded below by the number of non-trivial $\bC$-irreducible factors in the representation $\rho$ multiplied by the real rank of the group.
\end{prop}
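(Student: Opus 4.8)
The plan is to pass through Proposition~\ref{prop:LieT} to convert the bound into a statement about counting restricted weights, and then to exploit the fact that for the groups at hand these weights are symmetric under negation. Since every cocharacter defined over $\bR$ factors through a maximal $\bR$-split torus $S$, and $\dim S$ equals the real rank $r$, I would restrict attention to $\chi\in X_*(S)$. Decomposing $\rho=\bigoplus_i\rho_i$ into $\bC$-irreducible factors and applying Proposition~\ref{prop:LieT}, one gets $\lambda(\rho,\chi)=\sum_i\lambda(\rho_i,\chi)$, with the trivial factors contributing nothing because their only weight is $0$. It therefore suffices to exhibit a single cocharacter $\chi$ for which $\lambda(\rho_i,\chi)\ge r$ for each of the $m$ nontrivial factors at once.

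For a fixed nontrivial factor $\rho_i$, let $\Omega_i$ be its multiset of weights restricted to $S$, viewed in $X^*(S)\otimes\bR$. I would establish two properties. First, $\Omega_i$ spans $X^*(S)\otimes\bR$: since $H$ is simple and $\rho_i$ is nontrivial, $\ker\rho_i$ is finite, so no positive-dimensional subtorus of $S$ acts trivially on $\rho_i$, which is exactly the assertion that the restricted weights have trivial common kernel. Second, $\Omega_i$ is stable under $\mu\mapsto-\mu$, because it is stable under the restricted Weyl group $W_{\bR}$ and, for a group of Hermitian type, $-1\in W_{\bR}$ (the restricted root system is of type $C$ or $BC$). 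Combining the two, the nonzero members of $\Omega_i$ fall into at least $r$ antipodal pairs. Choosing $\chi$ regular, so that it is orthogonal to no weight of any $\rho_i$, exactly one member of each such pair pairs positively with $\chi$; hence $\lambda(\rho_i,\chi)\ge r$, and this holds for \emph{every} regular $\chi$ simultaneously. Summing over the $m$ factors gives $\lambda(\rho,\chi)\ge mr$, and therefore $\lambda(\rho)\ge mr$.

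The essential point, and the main obstacle, is the negation-symmetry of the restricted weights: it is what allows one fixed regular $\chi$ to place at least $r$ weights of \emph{each} factor on the positive side. Without it the factors compete --- for instance over a group whose restricted system is of type $A_{\ge 2}$, such as $\SL_3(\bR)$ with $\rho=V\oplus V^\ast$, no single $\chi$ achieves the bound --- so the argument genuinely uses that $H$ is of Hermitian type (equivalently, that the relevant representation is self-dual). The remaining ingredients, namely the additivity over factors and the elementary fact that a negation-symmetric spanning multiset of an $r$-dimensional space contains at least $r$ antipodal pairs, are routine.
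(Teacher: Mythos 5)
Your proof is correct, and it supplies something the paper does not: the paper gives no argument at all, introducing this bound as a ``trivial'' consequence of Proposition \ref{prop:LieT}. Your route is presumably the intended one --- decompose $\rho$ over $\bC$, use additivity of $\lambda(\cdot,\chi)$ over constituents, and use finiteness of the kernel of each non-trivial constituent of a simple group to see that its restricted weights span $X^*(S)\otimes\bR$ --- but you correctly isolate the one step that is genuinely not trivial: a single $\chi$ must serve all $m$ constituents simultaneously, and this requires the \emph{per-constituent} negation-symmetry of restricted weights, which you derive from $-1\in W_{\bR}$ (restricted root system of type $C_r$ or $BC_r$ for a simple group of Hermitian type). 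Your $\SL_3(\bR)$ example with $\rho=V\oplus V^\ast$ (where $mr=4$ but $\lambda(\rho)=3$) shows the proposition is in fact false as literally stated for an arbitrary simple real group; it is rescued only by the paper's standing restriction to groups of Hermitian type, and indeed every application in the paper (to $\SU(2,q-1)$, $\SU(1,1)$, $\SO(2,2n)$) stays within that range, so your added hypothesis is exactly the right repair rather than a loss of generality. One caveat: your parenthetical ``(equivalently, that the relevant representation is self-dual)'' is too weak as stated --- in your own counterexample $\rho=V\oplus V^\ast$ \emph{is} self-dual (it lands in $\SO(3,3)$), yet the bound fails, because self-duality of the total representation only pairs each constituent with its dual and does not yield the per-constituent antipodal symmetry; what is needed is precisely $-1\in W_{\bR}$ (or self-duality of each constituent's restricted weight multiset), which is what your main argument actually uses. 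The remaining ingredients --- reduction to $\chi\in X_*(S)$ up to conjugacy, existence of a regular rational $\chi$, $W_{\bR}$-stability of each constituent's restricted weights, and the extraction of $r$ distinct antipodal pairs from $r$ linearly independent weights --- are all sound.
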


From the above, we immediately obtain also the strictly stronger lower bounds:
\begin{cor}
If $H' \subset H$ are simple, then the quantity $\lambda(\rho)$ is bounded below by the number of non-trivial $\bC$-irreducible factors in the representation $\rho|_{H'}$ times the real rank of $H'$.
\end{cor}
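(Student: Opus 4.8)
The plan is to deduce this directly from the preceding Proposition by applying it to the subgroup $H'$, after observing that $\lambda(\rho)$ can only grow when we enlarge the group over which the optimizing cocharacter $\chi$ is allowed to range.

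The first step is to note that the quantity $\lambda(\rho,\chi) = \dim(V^\infty)$ depends only on the composite homomorphism $\rho\circ\chi:\bG_m\to\GL(V)$, since $V^\infty$ is defined purely in terms of the limiting behaviour of $\rho\circ\chi(t)(v)$ as $t\to\infty$ and makes no reference to the ambient group $H$. In particular, if a cocharacter $\chi:\bG_m\to H$ factors through the inclusion $H'\injects H$, then $\rho\circ\chi = (\rho|_{H'})\circ\chi$, and hence $\lambda(\rho,\chi) = \lambda(\rho|_{H'},\chi)$.

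Next I would exploit the inclusion of cocharacter sets: every cocharacter $\chi:\bG_m\to H'$ is, via $H'\injects H$, also a cocharacter into $H$. Since $\lambda(\rho)$ is by definition the maximum of $\lambda(\rho,\chi)$ over all cocharacters into $H$, whereas $\lambda(\rho|_{H'})$ is the same maximum taken only over cocharacters into $H'$, the former maximum ranges over a larger index set, so that
\[ \lambda(\rho) = \max_{\chi:\bG_m\to H}\lambda(\rho,\chi) \;\geq\; \max_{\chi:\bG_m\to H'}\lambda(\rho,\chi) = \lambda(\rho|_{H'}), \]
where the final equality is the observation of the previous step.

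Finally I would apply the preceding Proposition to the simple group $H'$ and its representation $\rho|_{H'}$, which bounds $\lambda(\rho|_{H'})$ below by the number of non-trivial $\bC$-irreducible factors of $\rho|_{H'}$ times the real rank of $H'$; chaining this with the displayed inequality yields the claim. I do not expect a genuine obstacle here, the statement being essentially a monotonicity remark. The only point that requires care, and which the first step is designed to settle, is that restricting the optimization to cocharacters factoring through $H'$ genuinely computes $\lambda(\rho|_{H'})$ and not some a priori smaller quantity, i.e. that the definition of $\lambda$ is intrinsic to the composite representation in $\GL(V)$ rather than to the ambient group through which one factors.
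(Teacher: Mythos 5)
Your proposal is correct and matches the paper's (unwritten) argument: the paper asserts the corollary follows ``immediately'' from the preceding proposition, and the content you supply --- that $\lambda(\rho,\chi)$ depends only on the composite $\rho\circ\chi$, so maximizing over cocharacters of $H$ dominates maximizing over those factoring through $H'$, after which the proposition applied to $(H',\rho|_{H'})$ gives the bound --- is exactly the monotonicity the author had in mind. Your care in checking that $\lambda(\rho|_{H'})$ is intrinsic to the composite representation rather than the ambient group is a worthwhile explicit addition, but it does not constitute a different route.
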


We shall now use the above Corollary to rule out most representations of the groups listed above. 

The following propositions follow immediately from the well known classification of representations of special unitary groups as well as the well known branching rules, see for example \cite{FH_rep}.
\begin{prop}\label{prop:branchSU2}
Consider the group $H=\SU(2,q)$, where $q\ge 2$ and the subgroup $H'=\SU(2,q-1)$
.
For every representation of $H$ except the standard representation, $\rho|_{H'}$ has at least two non-trivial $\bR$-irreducible factors.
\end{prop}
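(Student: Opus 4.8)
The plan is to pass to complexifications and reduce everything to a branching computation for special linear groups. The group $H=\SU(2,q)$ is a real form of $\SL_{q+2}(\bC)$, so its irreducible representations are indexed by dominant weights $\lambda=\sum_{i=1}^{q+1}a_i\omega_i$ of $\SL_{q+2}$, with $a_i=\langle\lambda,\alpha_i^\vee\rangle\ge 0$; the standard representation is $V_{\omega_1}$ and its dual, which for $\SU$ is the complex conjugate, is $V_{\omega_{q+1}}$. The inclusion $H'=\SU(2,q-1)\hookrightarrow H$ is the stabilizer of a negative line, so on complexifications it is the standard inclusion $\SL_{q+1}(\bC)\hookrightarrow\SL_{q+2}(\bC)$ as the Levi block of a maximal parabolic. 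I would therefore compute the constituents of $\rho|_{H'}$ by the classical $\GL_{q+2}\downarrow\GL_{q+1}$ branching rule and then count the nontrivial $\bR$-irreducible pieces.

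Next I recall the branching (Gelfand--Tsetlin) rule: writing $\lambda=(\lambda_1\ge\cdots\ge\lambda_{q+2})$, the restriction of $V_\lambda$ to $\SL_{q+1}$ is multiplicity free, its constituents being the $V_\mu$ with $\mu=(\mu_1\ge\cdots\ge\mu_{q+1})$ interlacing $\lambda$, that is $\lambda_j\ge\mu_j\ge\lambda_{j+1}$ for all $j$. Hence the number of constituents, counted over $\GL_{q+1}$, is $\prod_{i=1}^{q+1}(a_i+1)$. Such a constituent is the trivial $\SL_{q+1}$-representation exactly when $\mu$ is constant, which the interlacing inequalities force to occur for at most one $\mu$, and only when the middle coordinates coincide, i.e.\ when $a_2=\cdots=a_q=0$.

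The heart of the argument is then a short count. If $a_2=\cdots=a_q=0$ there is exactly one trivial constituent and the number of nontrivial ones is $(a_1+1)(a_{q+1}+1)-1$; this is $\le 1$ precisely when $(a_1,a_{q+1})\in\{(0,0),(1,0),(0,1)\}$, i.e.\ when $\lambda\in\{0,\omega_1,\omega_{q+1}\}$. If instead some middle $a_i\ne 0$, there is no trivial constituent and the count is $\prod(a_i+1)\ge 2$. Thus, among nontrivial irreducibles, only the standard representation $V_{\omega_1}$ and its conjugate $V_{\omega_{q+1}}$ restrict with a single nontrivial complex constituent, while every other $\lambda$ yields at least two.

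Finally I would translate from complex to real constituents. The restriction of the realification decomposes as $\bigoplus_\mu[V_\mu]$, and this has at least as many nontrivial $\bR$-irreducible summands (counted with multiplicity) as $V_\lambda|_{H'}$ has nontrivial complex constituents, since each nontrivial $[V_\mu]$ contributes at least one real summand; with the count above this gives $\ge 2$ for all $\lambda\ne\omega_1,\omega_{q+1}$. Since $\omega_1$ and $\omega_{q+1}$ are complex conjugate, their realifications coincide with the single real standard representation, which restricts to the real standard representation of $H'$ together with a trivial summand --- the asserted exception. I expect the main obstacle to be exactly this passage between complex and real constituents: one must keep track of the complex, real, or quaternionic type of each $V_\mu$ and of the fact that conjugate constituents $V_\mu,V_{\mu^*}$ realify to the same $\bR$-irreducible, so that the bound is cleanest when the bookkeeping is done with multiplicity. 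For the smallest cases, in particular the coincidence $\SU(2,2)\sim\SO(2,4)$, I would verify this conversion directly rather than inferring it from the complex count alone.
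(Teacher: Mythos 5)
Your reduction to the complexified branching problem is sound, and it is in substance the same route the paper takes: the paper offers no written proof of this proposition beyond citing the classification of representations and the branching rules in Fulton--Harris, so your interlacing computation is a faithful fleshing-out of that citation. The complex count itself is correct: over $\bC$ the restriction $V_\lambda|_{\SL_{q+1}}$ is multiplicity free with $\prod_i(a_i+1)$ constituents, a trivial constituent occurs only when $a_2=\cdots=a_q=0$ and then exactly once, and the only nonzero $\lambda$ with at most one nontrivial constituent are $\omega_1$ and $\omega_{q+1}$.

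The genuine gap is exactly the step you flagged and then deferred: the passage to $\bR$-irreducible factors. Your inequality is proved only for realifications $[V_\lambda]$, but an irreducible real representation of $H$ of \emph{real type} is a real form $W$ with $W\otimes\bC=V_\lambda$, not $[V_\lambda]$, and there the count can halve: if $V_\lambda|_{H'}=V_\mu\oplus V_{\mu^*}$ with $\mu\neq\mu^*$, then $W|_{H'}$ is a \emph{single} $\bR$-irreducible of complex type. This actually occurs. For $q=2$, take $\lambda=\omega_2$: it is self-conjugate of real type for $\SU(2,2)$ (its real form is the $6$-dimensional quadratic space realizing the isogeny $\SU(2,2)\sim\SO(2,4)$), and $V_{\omega_2}|_{\SL_3}=V_{\omega_1'}\oplus V_{\omega_2'}$ with no trivial constituent, so $W|_{\SU(2,1)}$ is the realified standard representation of $\SU(2,1)$ --- exactly one nontrivial $\bR$-irreducible factor, contradicting your blanket conclusion ``$\geq 2$ for all $\lambda\neq\omega_1,\omega_{q+1}$.'' (The paper tacitly excepts this representation: the proof of the subsequent corollary notes that $\SU(2,2)$ has a second representation restricting to the standard representation of $\SU(2,1)$, namely the isogeny.) More generally, your count shows the only dangerous weights are the self-conjugate middle fundamental weights $\lambda=\omega_{(q+2)/2}$ for even $q$, whose restriction is the conjugate pair $\omega'_{q/2}\oplus\omega'_{(q+2)/2}$; completing your argument requires determining, for each even $q$, whether $\Lambda^{(q+2)/2}\bC^{q+2}$ is of real or quaternionic type as an $\SU(2,q)$-representation (quaternionic is harmless, since the real irreducible is then $[V_\lambda]$ and restriction gives two summands with multiplicity; real type reproduces the failure above). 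Alternatively --- and this is all the paper's surrounding argument actually uses --- you can sidestep the type bookkeeping entirely by stating the bound for nontrivial $\bC$-irreducible constituents of $\rho\otimes\bC|_{H'}$: the paper's rank corollary is phrased in those terms, and for that count your interlacing computation already yields the bound $\geq 2$ for every $\lambda\notin\{0,\omega_1,\omega_{q+1}\}$ without any real/quaternionic analysis.
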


\begin{prop}\label{prop:standardSU2}
the standard representation for $\SU(p,q)$ over $\bR$ is reducible over $\bC$ and has two $\bC$-irreducible factors.
\end{prop}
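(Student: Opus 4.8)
The plan is to realize the standard real representation as the underlying real vector space of the defining complex representation, and then to invoke the general fact that complexifying such a representation yields the direct sum of the representation with its complex conjugate.

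Concretely, write $W = \bC^{p+q}$ for the space on which $\SU(p,q)$ acts $\bC$-linearly, preserving a Hermitian form of signature $(p,q)$; the standard representation over $\bR$ is $W$ regarded as a real vector space $W_\bR$ of dimension $2(p+q)$. Multiplication by $i$ on $W$ is an $\bR$-linear operator $J$ on $W_\bR$ with $J^2 = -\Id$ that commutes with the $\SU(p,q)$-action. Extending $J$ $\bC$-linearly to the complexification $W_\bR \otimes_\bR \bC$ and decomposing into its two eigenspaces for the eigenvalues $\pm i$ produces a direct sum of two $\SU(p,q)$-stable complex subspaces, which one identifies with $W$ and its complex conjugate $\overline{W}$ respectively.

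It then remains to check that each of these two summands is $\bC$-irreducible. For $W$ this holds because the complexification of $\SU(p,q)$ is $\SL_{p+q}(\bC)$ acting through its standard representation, which is irreducible; for $\overline{W}$ it holds because complex conjugation carries irreducible representations to irreducible representations. Hence the complexification of the standard representation of $\SU(p,q)$ has exactly two $\bC$-irreducible factors, as asserted.

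I expect no serious obstacle. The only points requiring care are the identification of the two $J$-eigenspaces with $W$ and $\overline{W}$ as $\SU(p,q)$-representations, rather than merely as complex vector spaces, and the routine observation that the defining representation remains irreducible after complexifying the group. Both are standard, so the proof is short once the $J$-eigenspace decomposition is set up.
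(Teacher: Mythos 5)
Your proof is correct, and it is precisely the standard argument the paper invokes without writing out: the paper simply cites the well-known representation theory of special unitary groups (Fulton--Harris), for which the restriction-of-scalars decomposition $W_\bR \otimes_\bR \bC \cong W \oplus \overline{W}$, together with irreducibility of the standard representation of the complexified group $\SL_{p+q}(\bC)$ (via Zariski density of $\SU(p,q)$), is exactly the intended content. The two points you flag for care --- equivariance of the $J$-eigenspace identifications and irreducibility after complexification --- are indeed the only ones needing checking, and both go through as you describe.
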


\begin{cor}
The only non-trivial maps from a group $\SU(2,q)$ to a group $\SO(2,n)$ come from the isogeny $\SU(2,2)\sim \SO(2,4)$.
\end{cor}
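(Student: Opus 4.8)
The plan is to convert the existence of a non-trivial map $\rho\colon\SU(2,q)\to\SO(2,n)$ into the representation-theoretic bound of the Lemma and then to eliminate all but one possibility using the rank estimates. Since $p=2\le q$, we may assume $q\ge 2$. A non-trivial map inducing a non-trivial map of Hermitian symmetric domains produces a non-trivial real representation $\rho$ of $H=\SU(2,q)$ whose image lies in an orthogonal group of signature $(2,n)$; as that group has real rank $2$, the Lemma forces $\lambda(\rho)\le 2$. Throughout I use that $\SU(2,q)$ has real rank $2$ for every $q\ge 2$, whereas $\SU(2,q-1)$ has real rank $2$ precisely when $q\ge 3$ and only real rank $1$ when $q=2$.

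First I would dispose of the standard representation for all $q\ge 2$ at once. By Proposition~\ref{prop:standardSU2} its realification has two non-trivial $\bC$-irreducible factors, so the lower bound of the Proposition for simple groups, applied with the real rank $2$ of $\SU(2,q)$, gives $\lambda(\rho)\ge 2\cdot 2=4>2$, a contradiction. Next, for $q\ge 3$ I would rule out every remaining representation simultaneously: restrict $\rho$ to the simple subgroup $H'=\SU(2,q-1)$, which still has real rank $2$. By Proposition~\ref{prop:branchSU2} any representation other than the standard one restricts to at least two non-trivial $\bR$-irreducible factors, and each of these complexifies to at least one non-trivial $\bC$-irreducible factor, so $\rho|_{H'}$ has at least two non-trivial $\bC$-irreducible factors. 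Its Corollary then yields $\lambda(\rho)\ge 2\cdot 2=4>2$, which is again impossible. Combined with the treatment of the standard representation, this shows that for $q\ge 3$ there is no non-trivial map $\SU(2,q)\to\SO(2,n)$ at all.

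This leaves the case $q=2$, which I expect to be the main obstacle. Here the only available subgroup of the form $\SU(2,q-1)$ is $\SU(2,1)$, whose real rank is merely $1$; consequently Proposition~\ref{prop:branchSU2} together with the Corollary yields only $\lambda(\rho)\ge 1\cdot 2=2$ and fails to force a contradiction, so the clean rank argument breaks down. To finish I would argue directly with $\SU(2,2)\sim\Spin(2,4)$. Any representation whose realification has at least two non-trivial $\bC$-irreducible factors already satisfies $\lambda(\rho)\ge 2\cdot 2=4$ by the Proposition for simple groups (using real rank $2$), and is thus excluded; so a surviving non-trivial $\rho$ must have realification with exactly one non-trivial $\bC$-irreducible factor, i.e.\ its non-trivial part is the real form of a self-conjugate, orthogonal-type complex irreducible. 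Among these I would use the weight description of Proposition~\ref{prop:LieT} to count, for the cocharacter realizing real rank $2$, the weights pairing positively with $\chi$: the representation $\wedge^2$ of the standard representation is the unique non-trivial irreducible with $\lambda=2$, and it realizes the isogeny $\SU(2,2)\sim\SO(2,4)$, while every larger self-conjugate irreducible (the adjoint $\wedge^{1}\otimes\wedge^{3}$, $2\wedge^2$, and so on) contributes strictly more positive weights and hence has $\lambda\ge 4$.

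The hard part is therefore precisely this finite check for $\SU(2,2)$: the subgroup-restriction machinery that settles $q\ge 3$ is too weak at $q=2$, and one must instead verify by the explicit weight count of Proposition~\ref{prop:LieT} that no self-conjugate irreducible other than $\wedge^2$ of the standard representation keeps $\lambda$ as low as $2$. Granting that check, any non-trivial $\rho$ is, up to trivial summands, the $\wedge^2$ representation, so it factors through the isogeny $\SU(2,2)\sim\SO(2,4)$, which is exactly the asserted conclusion.
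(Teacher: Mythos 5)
Your proposal is correct, and for the bulk of the argument it coincides with the paper's proof: like the paper, you kill the standard representation for all $q\ge 2$ via Proposition~\ref{prop:standardSU2} (two non-trivial $\bC$-irreducible factors times real rank $2$ gives $\lambda\ge 4>2$), and you kill all non-standard representations for $q\ge 3$ via Proposition~\ref{prop:branchSU2} and the corollary on restriction to the rank-$2$ subgroup $\SU(2,q-1)$, correctly observing (as does the paper's parenthetical) that this mechanism degenerates at $q=2$ because $\SU(2,1)$ has real rank $1$. Where you genuinely diverge is the $q=2$ endgame. The paper disposes of it with a ``simple check'' phrased in terms of branching: it asserts that exactly two representations of $\SU(2,2)$ restrict to $\SU(2,1)$ as the standard representation, the second being $\wedge^2$ realizing $\SU(2,2)\sim\SO(2,4)$. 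You instead argue intrinsically: the bound $\lambda(\rho)\le 2$ forces the complexification to have a single non-trivial $\bC$-irreducible factor, which (sitting orthogonally to the trivial summands inside $\SO(V)$) must be self-dual of orthogonal type, and then the weight count of Proposition~\ref{prop:LieT} over the restricted root data of $\fs\fu(2,2)$ singles out $\wedge^2$ of the standard representation (e.g.\ the adjoint $\omega_1+\omega_3$ already has $\lambda=6$ from the positive restricted roots of $\fs\fu(2,2)$ counted with multiplicity). Your route buys a more self-contained and checkable finite verification --- it explains \emph{why} only $\wedge^2$ survives rather than appealing to an unproved branching assertion, and it sidesteps the slightly delicate point that $\wedge^2$ itself is a formal exception to the letter of Proposition~\ref{prop:branchSU2} at $q=2$ --- at the cost of having to enumerate self-dual orthogonal-type irreducibles of $\SL_4(\bC)$; note only that your claim $\lambda\ge 4$ for all the larger ones is stronger than needed, since $\lambda>2$ already suffices for the contradiction.
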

\begin{proof}
Proposition \ref{prop:branchSU2} immediately rules out the existence of maps $\SU(2,q) \rightarrow \SO(2,n)$ not arising from the standard representation of $\SU(2,q)$ for $q>2$. (Note that for $q=2$, the rank of $\SU(2,1)$ is only $1$ and hence this is not an obstruction).

Proposition \ref{prop:standardSU2} immediately rules out the existence of maps $\SU(2,q) \rightarrow \SO(2,n)$ which do arise from the standard representation of $\SU(2,q)$ for all $q\ge 2$.

A simple check shows that $\SU(2,2)$ has two representations whose restriction to $\SU(2,1)$ is the standard representation, the second is the isogeny $\SU(2,2)\sim \SO(2,4)$ and that this is the only non-trivial representation of $\SU(2,2)$ landing in an $\SO(2,n)$ group.
\end{proof}

For simplicity we will consider $\SU(2,2)$ via its isogenous form $\SO(2,4)$. 

The following proposition is again an immediate consequence of the classification of representations of special unitary groups and their branching rules.
\begin{prop}\label{prop:branchSU1}
Consider $H=\SU(1,q)$ where $q\ge 2$ and $H'=\SU(1,1)$.
For every representation of $H$ except the standard representation, $\rho|_{H'}$ has at least one copy of the standard representation and one other non-trivial representation, hence at least three $\bC$-irreducible factors.
\end{prop}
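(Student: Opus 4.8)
The plan is to reduce the assertion to a branching computation for complex reductive groups and then to account for real structures. First I would complexify: the irreducible complex representations of $\SU(1,q)$ are the restrictions of the algebraic representations $V_\lambda$ of $\SL_{q+1}(\bC)$, and the inclusion $\SU(1,1)\injects\SU(1,q)$ complexifies to the block embedding $\SL_2\injects\SL_{q+1}$ acting on the first two coordinates and trivially on the $(q-1)$-dimensional complement (here one uses $q\ge 2$, so that the complement is nonzero). Since by Proposition \ref{prop:standardSU2} the standard representation of $\SU(1,1)$ is itself reducible over $\bC$ with two factors, it suffices to show that for every non-standard $\rho$ the restriction $\rho|_{H'}$ contains the standard representation of $\SU(1,1)$ together with at least one further non-trivial constituent; I would extract this from the complex branching of $V_\lambda$ to $\SL_2$, recalling that the standard highest weight and its dual (which is the complex conjugate representation, hence the same real representation) are the excluded cases.

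For the branching itself I would use the Gelfand--Tsetlin (interlacing) rule for the chain $\GL_{q+1}\supset\GL_q\supset\cdots\supset\GL_2$, which realizes exactly the above embedding, giving $V_\lambda|_{\SL_2}=\bigoplus_{\mu_1\ge\mu_2} m(\mu_1,\mu_2)\,\Sym^{\mu_1-\mu_2}(\bC^2)$, where $m(\mu_1,\mu_2)$ counts the interlacing chains from $\lambda$ down to the bottom row $(\mu_1,\mu_2)$ and the non-trivial constituents are those with $\mu_1>\mu_2$. The base case $q=2$ is transparent: a single interlacing step yields the ``box'' $\lambda_1\ge\mu_1\ge\lambda_2\ge\mu_2\ge\lambda_3$ with all multiplicities $1$, so the number of non-trivial constituents is $(\lambda_1-\lambda_2+1)(\lambda_2-\lambda_3+1)-1$, which equals $1$ precisely for the standard and its dual and is $\ge 2$ otherwise; moreover a bottom row with $\mu_1-\mu_2=1$ is always available once $\lambda$ is non-trivial, supplying the standard constituent. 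For general $q$ I would observe that enlarging $q$ only enlarges the set of reachable bottom rows (the range of $\mu_1$ loosens from $[\lambda_2,\lambda_1]$ to $[\lambda_q,\lambda_1]$), so the count of non-trivial constituents is monotone and the general statement follows from the base case.

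The step I expect to be the main obstacle is the faithful passage from the $\SL_2(\bC)$-content computed above to the number of non-trivial $\bC$-irreducible factors of the \emph{real} representation $\rho|_{H'}$, which forces one to track the Frobenius--Schur type of $\rho$: a representation of complex type ($\lambda\neq\lambda^\ast$) or of quaternionic type doubles its $\SL_2$-content, whereas one of genuine real type does not. The delicate cases are therefore the self-dual representations of small content; for example $\bigwedge^2\bC^4$ for $\SU(1,3)$ has $\SL_2$-content $2\,\Sym^1(\bC^2)$, which would give only two factors were it real, but it is in fact quaternionic (reflecting the coincidence $\SU(1,3)\sim\SO^\ast(3)$ noted above) and so doubles to four. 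Verifying that every such self-dual representation genuinely doubles, and that the complex-type standard representation, whose content is a single $\Sym^1(\bC^2)$ giving exactly two factors, is the unique honest exception, is where the real work lies; combining this with the branching count then yields the standard constituent (two $\bC$-factors) together with a further non-trivial one, hence at least three $\bC$-irreducible factors.
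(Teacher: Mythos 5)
Your proposal is correct and takes essentially the same route as the paper, which justifies this proposition in one line as ``an immediate consequence of the classification of representations of special unitary groups and their branching rules'' (citing Fulton--Harris) with no further detail. You in fact supply substantially more than the paper does --- the explicit Gelfand--Tsetlin count, the monotonicity reduction to $q=2$, and above all the Frobenius--Schur bookkeeping showing that the delicate self-dual case $\bigwedge^2\bC^4$ of $\SU(1,3)$ is quaternionic (so its $\SL_2$-content doubles), a point on which the truth of the proposition genuinely depends and which the paper's citation silently elides.
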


The following corollary is an immediate consequence.
\begin{cor}
The only non-trivial maps from a group $\SU(1,q)$ to a group $\SO(2,n)$ for $q\ge 2$ come from the standard representation of $\SU(1,q)$.
\end{cor}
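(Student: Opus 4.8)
The plan is to run exactly the argument used for the preceding $\SU(2,q)$ corollary, replacing Proposition \ref{prop:branchSU2} by Proposition \ref{prop:branchSU1} and branching to $H' = \SU(1,1)$ instead of $\SU(2,q-1)$. First I would translate the target into the numerical constraint supplied by the Lemma. A non-trivial map $\SU(1,q)\to\SO(2,n)$ is the same datum as a representation $\rho$ on $V=\bR^{2+n}$ preserving a quadratic form of signature $(2,n)$. For any cocharacter $\chi:\bG_m\to\SU(1,q)$ the contracted subspace $V^\infty$ is totally isotropic, since for $v,w\in V^\infty$ we have $\langle v,w\rangle=\langle\rho\chi(t)v,\rho\chi(t)w\rangle\to 0$; as a form of signature $(2,n)$ has Witt index $\min(2,n)\le 2$, the Lemma forces $\lambda(\rho)\le 2$ for every map into such an $\SO(2,n)$.

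Next I would invoke the branching rule. The subgroup $H'=\SU(1,1)$ has real rank $1$. By Proposition \ref{prop:branchSU1}, every $\bC$-irreducible non-trivial representation of $\SU(1,q)$ other than the standard representation restricts to $\SU(1,1)$ with at least three non-trivial $\bC$-irreducible factors. Feeding this into the Corollary bounding $\lambda(\rho)$ below by (number of non-trivial $\bC$-irreducible factors of $\rho|_{H'}$) times the real rank of $H'$ yields $\lambda(\rho)\ge 3\cdot 1 = 3$ for any such constituent, contradicting $\lambda(\rho)\le 2$. Hence no non-trivial $\bC$-irreducible constituent of $\rho$ other than the standard representation can appear, which is precisely the assertion that the only non-trivial maps come from the standard representation.

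The one point requiring care — and really the only obstacle — is the bookkeeping for reducible $\rho$: I must know that the presence of a single forbidden constituent already pushes $\lambda(\rho)$ above $2$, i.e.\ that $\lambda$ does not decrease when constituents are added. This is immediate from Proposition \ref{prop:LieT}, which computes $\lambda(\rho,\chi)$ by counting weights with multiplicity over all of $\rho$, so the count for $\rho$ dominates the count for any subrepresentation. A final consistency check, in keeping with the fact that the standard representation is \emph{not} excluded, is to note that a single copy of the standard representation of $\SU(1,q)$ restricts to $\SU(1,1)$ as the standard representation plus trivial summands, giving only two non-trivial $\bC$-irreducible factors and thus $\lambda=2$; this both confirms that the bound is sharp and that the standard representation genuinely produces the map $\SU(1,q)\to\SO(2,2q)$.
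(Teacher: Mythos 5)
Your proposal is correct and follows exactly the route the paper intends: the paper states this corollary as an ``immediate consequence'' of Proposition \ref{prop:branchSU1}, and your argument---combining the $\lambda(\rho)\le 2$ constraint from the Lemma (via the Witt index of a signature $(2,n)$ form) with the lower bound $\lambda(\rho)\ge 3$ from branching to $\SU(1,1)$, using the weight-counting monotonicity of Proposition \ref{prop:LieT} to handle reducible $\rho$---is precisely the same scheme used in the paper's explicit proof of the analogous $\SU(2,q)$ corollary. Your consistency check that the standard representation achieves $\lambda=2$ is a nice confirmation of sharpness and matches the paper's Proposition \ref{prop:standardSU2}.
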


The following proposition is again an immediate consequence of the classification of representations of special orthogonal groups and their branching rules.
\begin{prop}
\begin{itemize}
\item
Consider $H=\SO(2,2n+1)$ with $n\ge2$ and $H'=\SO(2,2n)$.
For every representation other than the standard representation $\rho|_{H'}$ has at least two $\bR$-irreducible factors.
\item
Consider $H=\SO(2,2n+2)$ with $n\ge2$ and $H'=\SO(2,2n)$.
For every representation other than the standard representation $\rho|_{H'}$ has at least two $\bR$-irreducible factors.
\end{itemize}
Note that $\SO(2,2)$ has a second representation into $\SO(2,4)$ via the map $\SO(2,2) \sim \SO(1,2)\times\SO(1,2)$. The inclusion $\SO(1,2)\times\SO(1,2) \injects \SO(2,4)$ does not give a map of symmetric spaces.
\end{prop}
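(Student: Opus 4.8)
The plan is to reduce both restrictions to the classical, multiplicity-free branching rules for the complexified groups. Writing the signature-$(2,m)$ groups through their complexifications, the two restrictions in question are $\SO(2n+3,\bC)\supset\SO(2n+2,\bC)$ (type $B_{n+1}\supset D_{n+1}$, codimension one) and $\SO(2n+4,\bC)\supset\SO(2n+2,\bC)$ (type $D_{n+2}\supset D_{n+1}$, codimension two); I would factor the latter as $D_{n+2}\supset B_{n+1}\supset D_{n+1}$ so that only the codimension-one rules are needed. For each such step the irreducible constituents of the restriction of the representation with highest weight $\lambda=(\lambda_1\ge\cdots\ge\lambda_r)$ are precisely the dominant weights $\mu$ interlacing $\lambda$, namely $\lambda_1\ge\mu_1\ge\lambda_2\ge\mu_2\ge\cdots$, each occurring with multiplicity one (with the type-$D$ step contributing $2\lambda_r+1$ choices for the signed final coordinate). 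Counting constituents then becomes a purely combinatorial exercise.

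First I would record that the trivial constituent $\mu=0$ occurs if and only if $\lambda$ is a single-row weight $(\lambda_1,0,\dots,0)$; equivalently, as soon as $\lambda_2\ge 1$ every constituent of $\rho|_{H'}$ is automatically non-trivial. Next I would observe that the total number of constituents, the product $\prod_i(\lambda_i-\lambda_{i+1}+1)$ times the type-$D$ factor $2\lambda_r+1$, equals $1$ only when $\lambda=0$; hence any non-trivial $\lambda$ with $\lambda_2\ge 1$ already yields at least two non-trivial constituents. It then remains to treat the single-row weights directly: for $\lambda=(\lambda_1,0,\dots,0)$ the constituents are exactly the single-row weights $(\ell,0,\dots,0)$ with $0\le\ell\le\lambda_1$, of which precisely $\lambda_1$ are non-trivial, so there are at least two non-trivial constituents once $\lambda_1\ge 2$, and exactly one, the standard representation itself, when $\lambda_1=1$. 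The spin weights (half-integral $\lambda_r$) are subsumed by the same count, since then $2\lambda_r+1\ge 2$ and $\mu=0$ cannot occur. The codimension-two case follows by composing the two codimension-one steps, where the standard representation again restricts to the standard representation plus copies of the trivial.

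The one step requiring genuine care, and the point I expect to be the main obstacle, is the passage between $\bC$-irreducible and $\bR$-irreducible constituents, since a real factor of complex type complexifies to a conjugate pair. For the intended application this is harmless: the corollary I feed the result into is phrased for $\bC$-irreducible factors, so two non-trivial complex constituents already force $\lambda(\rho)\ge 2\cdot\rk_\bR H' = 4 > 2$. To obtain the literal statement about $\bR$-irreducible factors I would note that the constituents appearing for tensor (integer-weight) representations are of real type, being built from the real standard representation, so that two distinct complex constituents are two distinct real factors; the spin cases only produce strictly more constituents, and either way at least two non-trivial real factors survive.

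Finally, the remark concerning $\SO(2,2)$ lies outside the range $n\ge 2$, where $H'=\SO(2,2)$ fails to be simple and the corollary does not apply, so I would dispose of it by hand. Using the isogeny $\SO(2,2)\sim\SO(1,2)\times\SO(1,2)$ together with $\SO(2,4)\sim\SU(2,2)$, the extra embedding is the tensor map $\SL_2\times\SL_2\to\SL_4$ acting on $\bC^2\otimes\bC^2$, and I would check directly that the induced map of symmetric spaces $\bH\times\bH\to$ the symmetric domain of $\SO(2,4)$ is not holomorphic, hence is not a map of Hermitian symmetric domains.
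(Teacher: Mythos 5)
Your overall route coincides with the paper's: the paper gives no written proof, asserting the proposition to be an immediate consequence of the classification of representations of special orthogonal groups and their branching rules, and your interlacing-pattern count is exactly that argument carried out in detail. The combinatorics are right: the trivial constituent occurs iff $\lambda$ is single-row; the total count $\prod_i(\lambda_i-\lambda_{i+1}+1)$ times the type-$D$ factor $2\lambda_r+1$ equals $1$ only for $\lambda=0$; single-row weights give exactly $\lambda_1$ non-trivial constituents; and the codimension-two case does follow by factoring through $B_{n+1}$ (when the middle restriction has a single non-trivial constituent, $\lambda$ must be of the form $(c,\dots,c,\pm c)$, and the second step then produces $2c+1\ge 2$ non-trivial constituents). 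Your handling of the $\SO(2,2)$ remark also matches the paper, which simply asserts the non-holomorphy of that embedding.

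The one genuine soft spot is your passage from $\bC$-constituents to $\bR$-irreducible factors. The claim that all constituents of integer-weight (tensor) representations of $H'$ are of real type is false in general: for $D_{n+1}$ with $n+1$ odd, an integer weight $\mu$ with $\mu_{n+1}\neq 0$ has dual $(\mu_1,\dots,\mu_n,-\mu_{n+1})\neq \mu$, so such constituents are of complex type and a conjugate pair can in principle merge into a \emph{single} $\bR$-irreducible factor. The spin representation of $B_{n+1}$ shows this merging genuinely happens: its restriction to $D_{n+1}$ is exactly the two half-spin representations, and for the real forms at hand (e.g.\ the $8$-dimensional real spin module of $\Spin(2,5)$ restricted to $\Spin(2,4)\cong \SU(2,2)$, which is the single $\bR$-irreducible module $\bC^4$ of complex type) these constitute one real factor, not two — so your assertion that "the spin cases only produce strictly more constituents, and either way at least two non-trivial real factors survive" is not correct as stated. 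The literal statement is nonetheless salvageable by a short supplementary observation: in the $B_{n+1}\supset D_{n+1}$ interlacing, for fixed $\mu_1,\dots,\mu_n$ the last coordinate runs through the entire string $\left|\mu_{n+1}\right|\le\min(\mu_n,\lambda_{n+1})$, so whenever a complex-type pair $\mu_{n+1}=\pm c$ with $c\ge 1$ occurs, the self-dual constituent with $\mu_{n+1}=0$ occurs alongside it; hence a conjugate pair never exhausts the non-trivial part of the restriction and at least two non-trivial $\bR$-irreducible factors survive for all integer weights. The half-integral weights lie outside the proposition as literally stated, being representations of $\Spin(2,2n+1)$ rather than of $H=\SO(2,2n+1)$; and, as you correctly note as a fallback, the downstream corollary only needs the count of non-trivial $\bC$-irreducible constituents, so the application to ruling out non-standard maps is unaffected either way.
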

The following corollary is an immediate consequence.
\begin{cor}
The only non-trivial maps from a group $\SO(2,\ell)$ to a group $\SO(2,n)$ inducing a map of symmetric spaces for $\ell > 1$ come from the standard representation of $\SO(2,\ell)$.
\end{cor}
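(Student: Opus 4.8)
The plan is to reproduce the template used for the $\SU(2,q)$ and $\SU(1,q)$ corollaries: pit the lower bound on $\lambda(\rho)$ coming from the Corollary to Proposition~\ref{prop:LieT} against the upper bound forced by the target $\SO(2,n)$, feeding in the branching Proposition above.

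First I would record the upper bound. A map $\rho\colon\SO(2,\ell)\to\SO(2,n)$ has image in the orthogonal group of the standard signature-$(2,n)$ form on $V=\bR^{n+2}$, which has Witt index $2$. For any cocharacter $\chi$ the space $V^\infty$ is totally isotropic, since for $v,w\in V^\infty$ we have $\langle v,w\rangle=\lim_{t\to\infty}\langle\rho\chi(t)v,\rho\chi(t)w\rangle=0$ because the image preserves the form; hence $\dim V^\infty\le2$ and the Lemma gives $\lambda(\rho,\chi)\le2$ for all $\chi$, so $\lambda(\rho)\le2$.

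Next I would derive the contradicting lower bound in the range $\ell\ge5$ covered by the Proposition. Writing $\ell=2n+1$ or $\ell=2n+2$ with $n\ge2$, set $H'=\SO(2,2n)\subset\SO(2,\ell)$, a simple subgroup of real rank $2$. If $\rho$ is not the standard representation, the Proposition gives that $\rho|_{H'}$ has at least two non-trivial $\bR$-irreducible factors; as each non-trivial $\bR$-irreducible constituent complexifies to one or two non-trivial $\bC$-irreducible constituents, $\rho|_{H'}$ has at least two non-trivial $\bC$-irreducible factors. The Corollary to Proposition~\ref{prop:LieT} then forces $\lambda(\rho)\ge2\cdot\rk_\bR(H')=4$, contradicting $\lambda(\rho)\le2$. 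Thus for $\ell\ge5$ the standard representation is the only non-trivial candidate.

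It remains to treat the cases $\ell\in\{2,3,4\}$ not reached by the Proposition, and here lies the only real bookkeeping. For $\ell=4$ the isogeny $\SU(2,2)\sim\SO(2,4)$ reduces the claim to the already-proved corollary for $\SU(2,q)$. For $\ell=2$ the group $\SO(2,2)$ is not simple, and its only non-standard representation into $\SO(2,4)$ factors through $\SO(1,2)\times\SO(1,2)$, which by the remark following the Proposition does not induce a map of symmetric spaces. For $\ell=3$, via $\SO(2,3)\sim\Sp(4)$, one checks directly against the low-dimensional representations of $\Sp(4)$ that only the $5$-dimensional standard representation lands in an $\SO(2,n)$ inducing a map of symmetric spaces. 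I expect the main obstacle to be exactly this consistent handling of the small exceptional isogenies; the generic argument for $\ell\ge5$ is immediate once the two bounds on $\lambda(\rho)$ are assembled.
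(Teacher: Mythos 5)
Your proof is correct and takes essentially the same route as the paper: the paper derives this corollary as an ``immediate consequence'' of the preceding branching proposition, i.e.\ exactly your comparison of the lower bound $\lambda(\rho)\ge 2\cdot\operatorname{rank}_{\bR}(H')=4$ from the branching data against the upper bound $\lambda(\rho)\le 2$ forced by the Witt index of a signature-$(2,n)$ form. Your explicit bookkeeping for the small cases---$\ell=4$ via $\SU(2,2)\sim\SO(2,4)$, $\ell=2$ via the $\SO(1,2)\times\SO(1,2)$ remark, and especially the direct check of low-dimensional $\Sp(4)\sim\SO(2,3)$ representations for $\ell=3$---fills in details the paper leaves implicit (the $\ell=3$ case in particular is not literally covered by the stated propositions), so this added care is a genuine improvement in completeness rather than a different method.
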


The following follows from an explicit check on the representations of $\SL_2$.
\begin{prop}
The group $\SU(1,1) = \SL(2) \sim \SO(2,1)$ admits three maps into orthogonal groups of rank $2$, namely:
\begin{itemize}
\item The inclusion of $\SU(1,1)$ into $\SO(2,2)$.
\item The symmetric squares representation to $\SO(2,1)$.
\item Two copies of the symmetric squares representation to $\SO(2,1)\times\SO(2,1) \injects  \SO(2,4)$; this final map does not give a map of symmetric spaces,
\end{itemize}
\end{prop}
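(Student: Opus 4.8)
The plan is to turn the statement into a finite check on the representation theory of $\SL(2)$, organised around the rank bound of Proposition~\ref{prop:LieT}. Since $\SU(1,1)\cong\SL(2)$ has real rank one, its irreducible real representations are precisely the symmetric powers $\Sym^k V$ of the standard two-dimensional representation $V$, with $\dim\Sym^k V=k+1$; moreover $\Sym^k V$ admits an invariant symmetric bilinear form exactly when $k$ is even, and an invariant alternating form when $k$ is odd. First I would use the weight description in Proposition~\ref{prop:LieT} to record that $\lambda(\Sym^k)=\ceil{k/2}$, the generator of the cocharacter lattice having $\ceil{k/2}$ positive weights on $\Sym^k V$, and that $\lambda$ is additive over direct sums. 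Because landing in an orthogonal group of rank $2$ forces $\lambda(\rho)\le 2$ by the Lemma, and because any alternating constituent must occur with even multiplicity to support a symmetric form, only finitely many $\rho$ survive: those assembled from $\Sym^1,\Sym^2$ (with $\lambda=1$) and $\Sym^3,\Sym^4$ (with $\lambda=2$), subject to $\lambda(\rho)\le2$ and orthogonality.

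I would then identify the target group in each surviving case from the signature of the invariant form. For $\Sym^2 V\cong\mathfrak{sl}_2$ the form is the Killing form, of signature $(2,1)$, giving the isogeny $\SL(2)\sim\SO(2,1)$. For $V\oplus V\cong V\otimes\bR^2$ the form is the tensor product of the two symplectic forms, of signature $(2,2)$, and $\SL(2)$ enters as one factor of $\SL(2)\times\SL(2)\sim\SO(2,2)$. For $\Sym^2 V\oplus\Sym^2 V$ the form has signature $(4,2)$, realising the block embedding $\SO(2,1)\times\SO(2,1)\injects\SO(2,4)$. The one further orthogonal candidate, $\Sym^4 V$, must also be examined: it yields a homomorphism into a rank-two orthogonal group but, by the holomorphicity test below, no map of symmetric spaces, so it is not recorded among the salient maps. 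These signature computations are elementary but need care, since the target is pinned down by the exact signature, not merely by the dimension.

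The decisive step is to decide which of these homomorphisms is a genuine map of Hermitian symmetric spaces, i.e. holomorphic, as opposed to a mere totally geodesic group embedding. For the isogeny to $\SO(2,1)$ this is automatic, and for the factor embedding into $\SO(2,2)\sim\SL(2)\times\SL(2)$ the induced map of domains is the holomorphic inclusion $\bH\injects\bH\times\bH$, $z\mapsto(z,i)$. The instructive case is the product $\SO(2,1)\times\SO(2,1)\injects\SO(2,4)$: the complex structure on each factor $\SO(2,1)$ is given by its maximal compact $\SO(2)$ rotating the positive-definite plane, whereas the Hermitian structure of $\SO(2,4)\cong\SO(4,2)$ is given by the $\SO(2)$ rotating the two-dimensional rank-contributing subspace. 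In the block embedding these planes lie in different factors of the maximal compact, so the complex-structure elements do not correspond and the embedding is not holomorphic; this is exactly why the third map fails to be a map of symmetric spaces.

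The main obstacle is this holomorphicity determination. The rank bound and the signature computations only locate the homomorphisms into rank-two orthogonal groups; to single out those that respect the complex structure one must separately track the complex-structure element, equivalently the $\SL(2)(\bC)$-conjugacy class of the image of the relevant cocharacter. The product embedding shows that this refinement is genuinely necessary: a homomorphism built by composing honest maps of symmetric spaces into the two $\SO(2,1)$ factors with a block inclusion of orthogonal groups need not itself be holomorphic.
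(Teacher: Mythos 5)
Your proposal is correct, and it is essentially the paper's own argument: the paper justifies this proposition only by the one-line remark that it ``follows from an explicit check on the representations of $\SL_2$'', and your write-up is precisely that check carried out with the paper's own tools --- the real irreducibles $\Sym^k V$, the count $\lambda(\Sym^k V)=\ceil{k/2}$ via Proposition \ref{prop:LieT}, the rank bound $\lambda(\rho)\le 2$, even multiplicity for the symplectic constituents, and then signature computations plus the holomorphy test on the Hodge cocharacter. One place where you are in fact more careful than the literal statement deserves note: your enumeration rightly surfaces $\Sym^4 V$, whose invariant form has signature $(3,2)$, hence a homomorphism $\SL(2)\to\SO(2,3)$ (the principal $\SL_2$; under $\SO(2,3)\sim\Sp(4)$ it corresponds to $\Sym^3$). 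Read literally, the proposition's count of ``three'' omits this fourth map into a rank-two orthogonal group, even though the third listed map $\SO(2,1)\times\SO(2,1)\injects\SO(2,4)$ equally fails to induce a map of symmetric spaces; your exclusion of $\Sym^4$ on holomorphy grounds (the characters $(z/\bar z)^{\pm 2}$ occur on $\Sym^4$, which the $\SO(2,n)$ datum forbids) is sound and leaves the paper's downstream classification untouched, but it would be cleaner to flag it explicitly as an addendum to the stated list rather than discard it as non-salient.
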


The following is a consequence of work of Satake (see \cite{SatakeHolomorphic}) which gives a classification of possible sub-symmetric spaces for $\Sp(2n)$. 
\begin{prop}
There is no map from the Hermitian symmetric space associated to $E_6$ into any Hermitian symmetric space associated to $\Sp(2n)$ for any $n$.

There is a map from the Hermitian symmetric space for $\SO(2,\ell)$ into one associated to $\Sp(2n)$ for some $n$.

It follows that there is no map from the Hermitian symmetric space associated to $E_6$ into any Hermitian symmetric space associated to $\SO(2,\ell)$.
\end{prop}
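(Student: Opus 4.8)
The plan is to deduce all three assertions from Satake's classification of the equivariant holomorphic embeddings of irreducible Hermitian symmetric domains into Siegel upper half spaces \cite{SatakeHolomorphic}. The point is that a map of the symmetric space attached to $\Sp(2n)$ is the same data as a homomorphism of the underlying groups compatible with the complex structures; Satake reformulated the existence of such a map as a condition (his $(H_1)$ and $(H_2)$ conditions) on the associated symplectic representation of the source group, i.e. the existence of an invariant alternating form of the correct Hodge type, and from this produced an explicit enumeration of exactly which domains arise as totally geodesic holomorphic subdomains of a Siegel space. Everything below is organized around reading off consequences of that enumeration.

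For the first statement I would appeal directly to the enumeration: the exceptional domain attached to $E_6$ (type $EIII$) does not occur on Satake's list. Equivalently, no non-trivial representation of the relevant non-compact Hermitian real form of $E_6$ carries an invariant symplectic form of Hodge type, so the $(H_1)$ condition fails for every such representation. Hence there is no map of the $E_6$ symmetric space into any Siegel domain. For the second statement I would instead exhibit the embedding explicitly, via the Kuga--Satake / Clifford-algebra construction: the standard representation of $\SO(2,\ell)$ induces, through the even Clifford algebra $\CVe$, a spin representation equipped with a natural invariant symplectic form of Hodge type, realizing the orthogonal domain as a holomorphic subdomain of a Siegel space (equivalently, one checks that the $\SO(2,\ell)$ domain does appear on Satake's list).

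The third statement then follows formally by composition: were there a non-trivial map of the $E_6$ symmetric space into an $\SO(2,\ell)$ symmetric space, composing it with the embedding of the $\SO(2,\ell)$ domain into a Siegel space supplied by the second statement would yield a map of the $E_6$ domain into a Siegel space, contradicting the first statement.

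The main obstacle is the first statement, the non-embeddability of the $E_6$ domain. Although I would phrase it as a citation to Satake, the genuine content is the representation-theoretic fact that every non-trivial representation of the Hermitian real form of $E_6$ fails the symplectic Hodge-type condition; establishing this from scratch would require analyzing the minuscule $27$-dimensional representation together with its contragredient, determining their symmetry type, and verifying that no induced representation admits an invariant alternating form compatible with a weight-one Hodge structure. The remaining two parts are comparatively routine once the classification and the Kuga--Satake construction are available.
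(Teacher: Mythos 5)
Your proposal is correct and is essentially the paper's own argument: the paper offers no proof beyond the remark that the proposition ``is a consequence of work of Satake (see \cite{SatakeHolomorphic})'', i.e.\ precisely your strategy of reading off the non-embeddability of the $E_6$ domain from Satake's classification, realizing the $\SO(2,\ell)$ domain in a Siegel space (via the spin/Kuga--Satake representation, which is the standard witness on Satake's list), and concluding by composition. Your write-up is in fact more detailed than the paper's, but there is no substantive difference in approach.
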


The various lemmas and propositions above allow us to conclude:
\begin{lemma}
We can characterize those simple non-compact Hermitian symmetric spaces which embed into the Hermitian symmetric space associated to an orthogonal group of type $(2,n)$ as follows, they are up to isogeny:
\begin{itemize}
\item $\SU(1,q)$ or $\UU(1,q)$ where the map is given by the standard representation.

Note that the map $\UU(1,q) \rightarrow \UU(1,q) \times \SO(2)$ is up to isogeny of symmetric spaces equivalent to considering $\SU(1,q)\times \SO(2)$ or $\UU(1,q)\times \SO(2)$ and we have chosen to ignore compact factors.
\item $\SO(2,\ell)$ where the map is given by the standard representation.
\end{itemize}

Moreover, the only possible product of simple non-compact type Hermitian symmetric spaces which can embed into such a Hermitian symmetric space come from the natural inclusion of $SO(2,2)$ into $SO(2,n)$.
\end{lemma}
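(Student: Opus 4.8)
The plan is to synthesise the preceding propositions and corollaries into a single list, organising the argument according to the real rank of $H$. Recall from the first Lemma and Proposition \ref{prop:LieT} that if $\rho\colon H\injects\GL(V)$ lands in an orthogonal group of a form of signature $(2,n)$, then, since that form has Witt index $2$, we must have $\lambda(\rho,\chi)\le 2$ for every $\bR$-split cocharacter $\chi$, that is $\lambda(\rho)\le 2$.

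First I would eliminate the groups of large rank. By the trivial lower bound, $\lambda(\rho)$ is at least the number of non-trivial $\bC$-irreducible factors of $\rho$ times the real rank of $H$, so a simple $H$ of real rank at least $3$ carrying a non-trivial $\rho$ would force $\lambda(\rho)\ge 3$. Reading off the classification table, this removes $\SU(p,q)$ with $p\ge 3$, the groups $\SO^\ast(n)$ with $n\ge 6$, the symplectic groups $\Sp(2n)$ with $n\ge 3$, and $E_7$. What remains are the rank one and rank two candidates $\SU(1,q)$, $\SU(2,q)$, $\SO(2,\ell)$, $\SO^\ast(5)$, together with $\Sp(2)\sim\SO(2,1)$, $\Sp(4)\sim\SO(2,3)$, and $E_6$.

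Next I would feed in the case-by-case corollaries already established. The corollary to Propositions \ref{prop:branchSU2} and \ref{prop:standardSU2} collapses all of $\SU(2,q)$ to the single isogeny $\SU(2,2)\sim\SO(2,4)$, which I absorb into the $\SO(2,\ell)$ family. The corollary to Proposition \ref{prop:branchSU1} leaves, for $q\ge 2$, only the standard representation of $\SU(1,q)$, giving the stated case (and the same for $\UU(1,q)$ up to the compact factor $\SO(2)$, which we ignore). The corollary to the branching proposition for $\SO(2,2n+1)$ and $\SO(2,2n+2)$ confines $\SO(2,\ell)$ with $\ell>1$ to its standard embedding, while the residual rank one and two groups $\SO(2,1)=\SU(1,1)=\SL_2$ and $\Sp(4)\sim\SO(2,3)$ are accounted for by the explicit $\SL_2$ proposition and the symplectic isogenies. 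Finally $E_6$ is excluded through \cite{SatakeHolomorphic}: it maps into no $\Sp(2n)$, whereas every $\SO(2,\ell)$ does, so $E_6$ cannot map into an $\SO(2,\ell)$.

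The genuinely delicate points are $\SO^\ast(5)$ and the product statement, and $\SO^\ast(5)$ is the main obstacle, since it is the one rank two candidate the stated propositions do not directly address and the crude rank bound cannot see. Here the rank bound only tells us that an admissible $\rho$ must be $\bC$-irreducible up to trivial summands, so it suffices to check the $\bC$-irreducible representations of $\SO^\ast(5)$ of small dimension: the spin representations already force $\lambda(\rho)>2$, leaving only the standard representation, for which $\lambda(\rho)=2$. I would then rule this last one out at the level of symmetric spaces rather than by rank, observing that the cocharacter defining the complex structure on the domain of $\SO^\ast(5)$ acts on the standard representation with the extreme weights occurring with multiplicity greater than one, which is incompatible with the $(2,n)$ Hodge type and so gives no holomorphic equivariant map of domains, exactly as for the non-example $\SO(1,2)\times\SO(1,2)\injects\SO(2,4)$. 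For the product statement, a non-trivial map of symmetric spaces must restrict non-trivially to each simple factor $H_i$, so each factor contributes at least its real rank, hence at least $1$, to $\lambda(\rho)$; the bound $\lambda(\rho)\le 2$ then permits at most two factors, each of real rank one, that is, each isogenous to $\SU(1,1)\sim\SO(2,1)$. The only product that arises is therefore $\SO(2,1)\times\SO(2,1)\sim\SO(2,2)$, embedded by the natural inclusion $\SO(2,2)\injects\SO(2,n)$, once one discards the inclusion $\SO(1,2)\times\SO(1,2)\injects\SO(2,4)$ that fails to respect the complex structure. Collecting these cases gives the asserted list.
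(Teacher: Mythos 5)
Your synthesis is correct and, for the most part, it is exactly what the paper intends: the paper offers no written proof of this lemma, presenting it as the immediate aggregation of the preceding propositions and corollaries, which is precisely your rank-stratified case analysis (large rank killed by the trivial lower bound on $\lambda(\rho)$; $\SU(2,q)$ collapsed to $\SU(2,2)\sim\SO(2,4)$; $\SU(1,q)$ and $\SO(2,\ell)$ confined to their standard representations; $E_6$ excluded via Satake; the product statement via additivity of real rank over simple factors, which also matches the paper's implicit reasoning and correctly discards $\SO(1,2)\times\SO(1,2)\injects\SO(2,4)$). Where you genuinely go beyond the paper is the rank-two case $\SO^\ast(5)$: you are right that none of the paper's stated propositions addresses it, so your explicit treatment fills a real lacuna in the written argument. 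One inaccuracy in that treatment, though it does not affect the conclusion: the standard representation of $\SO^\ast(5)$ (i.e.\ $\SO^\ast(10)$ in the usual notation) is quaternionic, so it has no $10$-dimensional real form, and on the underlying $20$-dimensional real space each nonzero restricted weight occurs with real multiplicity at least $4$; hence $\lambda(\rho,\chi)\geq 4$ for a suitable split cocharacter $\chi$, and the crude $\lambda$-bound already excludes it --- your claim that $\lambda(\rho)=2$ there is wrong, and the auxiliary holomorphy/Hodge-type argument, while valid in spirit (the extreme weight space must be complex one-dimensional for Hodge numbers $(1,n,1)$), is not actually needed. With that correction your write-up is a complete proof and is arguably more careful than the paper's own presentation, which leaves both $\SO^\ast(5)$ and the product assertion to the reader.
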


\begin{thm}
If $H\rightarrow \SO(2,n)$ gives rise to a map of Hermitian symmetric spaces of the non-compact type then up to isogeny:
\[ H \sim (H^{nc} \times H^{c}) \]
where  $H^c$ is compact, $H^{nc}$ is non-compact, and, up to isogeny, $H^{nc}$ is one of:
\[ \SO(2,\ell) \qquad \text{or} \qquad \SU(1,\ell) \qquad \text{or} \qquad  \UU(1,\ell). \]

Moreover, the map respects an orthogonal decomposition of the vector space into $V^{nc} \perp V^c$ where $V^{nc}$ has signature $(2,r)$ and $V^c$ has signature $(0,s)$ with $s+r=n$.
\end{thm}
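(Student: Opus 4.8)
The plan is to assemble the statement from the structural results already in hand and then pin down the orthogonal decomposition by an invariant-theoretic argument. First I would use the standard structure theory of real reductive groups: up to isogeny the group underlying $H$ splits as a product of a central torus and simple factors, and I would sort these factors according to whether the associated factor of the Hermitian symmetric space is compact or non-compact. Collecting the compact simple factors together with the compact part of the central torus gives $H^c$, while the remaining factors (together with the at most one central $\SO(2)$ needed to pass between $\SU$ and $\UU$) assemble into $H^{nc}$. Since the compact factors contribute nothing to the symmetric space, the map of Hermitian symmetric domains induced by $H \to \SO(2,n)$ factors through the domain attached to $H^{nc}$, which is of non-compact type.

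Next I would invoke the preceding Lemma to identify $H^{nc}$. That Lemma says each non-compact simple factor giving a non-trivial map of symmetric spaces is isogenous to $\SU(1,\ell)$, $\UU(1,\ell)$, or $\SO(2,\ell)$ embedded by its standard representation, and its ``moreover'' clause forbids products of non-compact simple factors except for the single configuration coming from $\SO(2,2) \injects \SO(2,n)$. As $\SO(2,2)$ is exactly the $\ell = 2$ instance of $\SO(2,\ell)$, in every case $H^{nc}$ is isogenous to a single group on the stated list; the ambiguity between $\SU(1,\ell)$ and $\UU(1,\ell)$ is precisely the central $\SO(2)$ already discussed, which we have allowed ourselves to shift into or out of $H^c$.

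Finally I would produce the decomposition $V = V^{nc} \perp V^c$. Let $V^c$ be the subspace on which $H^{nc}$ acts trivially and $V^{nc}$ the sum of the isotypic components attached to the non-trivial irreducible constituents of $\rho|_{H^{nc}}$. Because the quadratic form is $H^{nc}$-invariant and $V^{nc}$, $V^c$ share no common irreducible constituent, the restriction of the form to $V^{nc} \times V^c$ is an $H^{nc}$-equivariant pairing into the trivial representation and hence vanishes; thus $V^{nc} \perp V^c$ and the form is non-degenerate on each. By the classification recalled above the non-trivial part of $\rho|_{H^{nc}}$ is the standard representation, and it occurs with multiplicity one: over $\bR$ the standard representation of each group on the list already carries a form of signature $(2,\cdot)$, so a second copy would force the total signature to have at least four positive directions, which is impossible in $(2,n)$. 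Therefore $V^{nc}$ has signature $(2,r)$, and $V^c = (V^{nc})^{\perp}$ inherits signature $(0,s)$ with $r + s = n$.

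The formal bookkeeping of simple factors and the appeal to the Lemma are routine; the real content of the argument is the last step, where I must be sure that the non-trivial part of the representation is a single copy of the standard representation. This is what guarantees that all of the positivity is confined to $V^{nc}$ and hence that $V^c$ is negative definite, and it rests on combining the classification from the preceding Lemma with the multiplicity bound forced by having only two positive directions available.
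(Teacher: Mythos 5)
Your overall assembly (split off compact factors, quote the classification Lemma, decompose $V$ into trivial and non-trivial isotypic parts, use Schur orthogonality to get $V^{nc}\perp V^c$) matches how the paper intends the theorem to follow from the preceding lemmas, and those parts are fine. But the step you yourself identify as the real content --- multiplicity one plus confinement of positivity to $V^{nc}$ via signature counting --- has a genuine gap. An invariant form on an $\bR$-irreducible orthogonal representation is unique only up to a \emph{real scalar}, including negative ones, so a copy of the standard representation of $\SO(2,\ell)$ may sit inside $V$ with signature $(\ell,2)$ rather than $(2,\ell)$. For $\ell\geq 3$ this is killed by positivity, but for $\ell=1$ it is not: the standard (= symmetric square) representation of $\SO(2,1)\sim\SU(1,1)$ with the negated form has signature $(1,2)$, so two copies fit inside signature $(2,4)$ --- this is exactly the diagonal map through $\SO(2,1)\times\SO(2,1)\injects \SO(2,4)$, equivalently $\SO(1,2)\times\SO(1,2)\injects\SO(2,4)$, which the paper explicitly records as existing. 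Your claim that ``a second copy would force the total signature to have at least four positive directions'' is therefore false in this case, and likewise a single sign-flipped copy of signature $(1,2)$ would leave $V^c$ indefinite, contradicting the ``moreover'' clause, without being excluded by your counting.

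What actually rules these configurations out is not signature arithmetic but the Hermitian structure: the paper's explicit $\SL_2$ check states that the map via two copies of the symmetric square into $\SO(2,1)\times\SO(2,1)\injects\SO(2,4)$ \emph{does not give a map of symmetric spaces}, and similarly the inclusion $\SO(1,2)\times\SO(1,2)\injects\SO(2,4)$ fails to induce one (the holomorphy/equivariance condition on the map of Hermitian domains fails, since the two positive directions are split across the factors). So to close your argument you must import this input: after your isotypic decomposition, check that the hypothesis that $H\to\SO(2,n)$ induces a map of Hermitian symmetric domains forces the invariant form on the non-trivial part to have positive index exactly $2$ concentrated in a single standard copy, with the low-rank case $H^{nc}\sim\SL_2(\bR)$ handled by the explicit verification rather than by counting positive directions.
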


\section{Shimura Varieties}
\label{sec:Shim}

In this section we will exploit the properties of algebraic groups over $\bQ$ to obtain conditions on the possible $\bQ$-structure of the groups whose Shimura varieties will admit maps into the Shimura varieties attached to orthogonal groups of signature $(2,n)$.

The key idea of this section is the following.
If $V$ is the vector space on which $\SO(2,n)$ has its standard representation we associate to any inclusion $\rho: H\rightarrow \SO(2,n)$ a sub-algebra of $\End(V)$.
We shall use in this section the connection between algebras with involution and algebraic groups to conclude that $\rho$ factors through a certain class of subgroup.
We finally exploit the results of the previous section to conclude that $\rho$ surjects onto our constructed group.

\begin{nota}
Let $(V,q)$ be a quadratic space of signature $(2,n)$ over $\bQ$.

Let $\rho: H\hookrightarrow \SO(V,q)$ be an inclusion of groups giving rise to a map of Shimura varieties.

Denote by $V^\rho$ the subspace of $\rho(H)$-fixed vectors.
\end{nota}

\begin{rmk}
We are assuming that this map is an inclusion.
This both limits the component group of the Shimura variety associated to $H$ but also has some impact on existence of compact $\bQ$-simple factors.
\end{rmk}

The following lemma gives a useful first step in the reduction.
\begin{lemma}
There exists a decomposition of $V=V^\rho \oplus (V^\rho)^\perp$ where $V^\rho$ denotes the $\rho$-fixed vectors of $V$.
Moreover, $V^\rho$ has signature $(0,m)$ for some $m$ and the map $\rho$ factors through:
\[ H \rightarrow \SO( (V^\rho)^\perp ) \rightarrow \SO(V). \]
\end{lemma}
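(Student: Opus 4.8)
The statement to prove is the following lemma:

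\begin{quote}
There exists a decomposition $V = V^\rho \oplus (V^\rho)^\perp$, where $V^\rho$ denotes the $\rho$-fixed vectors of $V$. Moreover, $V^\rho$ has signature $(0,m)$ for some $m$, and $\rho$ factors through $H \to \SO((V^\rho)^\perp) \to \SO(V)$.
\end{quote}

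Let me plan the proof.

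The plan is to establish the orthogonal decomposition first, then determine the signature of $V^\rho$ using the real-theoretic results from Section 2, and finally verify that the factorization of $\rho$ is automatic once the decomposition is in place.

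First I would show that $V^\rho$ is a nondegenerate subspace, so that $V = V^\rho \oplus (V^\rho)^\perp$ is an honest orthogonal direct sum. Since $\rho(H) \subset \SO(V,q)$, every $h \in H$ acts by isometries, so for $v \in V^\rho$ and arbitrary $w \in V$ one has $q(v, \rho(h)w) = q(\rho(h^{-1})v, w) = q(v,w)$; this shows that $(V^\rho)^\perp$ is $\rho(H)$-stable, and of course $V^\rho$ is $\rho(H)$-stable by definition. It remains to check that $q$ restricted to $V^\rho$ is nondegenerate. If the radical of $q|_{V^\rho}$ were nonzero, it would be an isotropic subspace of $V$ fixed pointwise by $\rho(H)$; I would rule this out by passing to $\bR$ and invoking the classification of the previous section. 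Because $\rho$ gives rise to a map of Hermitian symmetric spaces of the noncompact type, Theorem (the last theorem of Section 2) tells us that over $\bR$ the representation respects an orthogonal decomposition $V_\bR = V^{nc} \perp V^c$ with $V^{nc}$ of signature $(2,r)$ and $V^c$ of signature $(0,s)$; in particular $V_\bR$ is a nondegenerate orthogonal sum of a fixed part (inside $V^c$) and a part carrying the nontrivial noncompact action. This forces the fixed space to be nondegenerate over $\bR$, hence over $\bQ$, giving the decomposition and simultaneously that $V^\rho \otimes \bR$ sits inside the negative-definite space $V^c$.

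Next I would pin down the signature of $V^\rho$. From the preceding paragraph, $V^\rho \otimes_\bQ \bR$ is contained in the compact (negative-definite) factor $V^c$ of signature $(0,s)$, so $q|_{V^\rho}$ is negative definite and $V^\rho$ has signature $(0,m)$ with $m = \dim_\bQ V^\rho$. The key input here is that the full signature-$(2,\cdot)$ part of $V$ must carry the nontrivial action — the two positive directions cannot be fixed, since a nonzero fixed vector of positive norm would contribute a positive direction to the fixed space, contradicting that the noncompact symmetric space (which sees the $(2,r)$ part) is precisely where $H$ acts nontrivially. This is the step I expect to require the most care, since it is exactly the point where the real Lie-theoretic classification of Section 2 is converted into a statement about the rational quadratic space.

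Finally, the factorization is immediate: since $(V^\rho)^\perp$ is $\rho(H)$-stable and $\rho(H)$ fixes $V^\rho$ pointwise, each $\rho(h)$ acts as the identity on $V^\rho$ and preserves the form on $(V^\rho)^\perp$, so $\rho(h)$ lies in the image of the natural inclusion $\SO((V^\rho)^\perp) \injects \SO(V)$ (extend by the identity on $V^\rho$). Thus $\rho$ factors as $H \to \SO((V^\rho)^\perp) \to \SO(V)$, as claimed. The main obstacle is the middle step — establishing that the fixed space is negative definite — and everything else is formal once the real classification is brought to bear.
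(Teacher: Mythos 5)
The paper states this lemma without proof, so there is no argument of record to compare against; judged on its own, your proposal has the right architecture (stability of $(V^\rho)^\perp$ under isometries, negative-definiteness of the fixed space via the real theory, and the formal extension-by-identity factorization into $\SO((V^\rho)^\perp)$), and the first and third steps are complete and correct as written.

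The middle step, however, is not yet a proof as phrased. Your justification that ``the two positive directions cannot be fixed, since a nonzero fixed vector of positive norm would contribute a positive direction to the fixed space, contradicting that the noncompact symmetric space \ldots is precisely where $H$ acts nontrivially'' is circular: it restates the desired conclusion (the fixed space has no positive directions) rather than deriving it. Moreover, the Section~2 theorem you invoke is stated for maps of Hermitian symmetric spaces of the noncompact type, which is degenerate exactly in the case the paper explicitly includes where $H(\bR)$ is compact (special/CM points; see the paper's remark that this is the one case with $H(\bR)$ compact), so your argument does not cover that case as written. Both defects are repaired by the same observation, which you gesture at but should make explicit: since $\rho$ gives a map of Shimura data, the Deligne torus homomorphism $h\colon \bS \rightarrow \SO(V_\bR)$ defining the orthogonal Shimura datum factors through $\rho$, and $h$ acts by rotation on a positive-definite plane $P$ and trivially on $P^\perp$, so its fixed space is $P^\perp$ of signature $(0,n)$. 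Hence $V^\rho \otimes \bR$ lies inside a negative-definite subspace (using that the fixed space commutes with base change, by Zariski density of $H(\bQ)$ in the connected reductive $H$), which gives anisotropy --- hence nondegeneracy and the orthogonal decomposition --- and the signature $(0,m)$ in one stroke, uniformly in all cases. Alternatively, nondegeneracy alone follows purely algebraically from reductivity of $H$: writing $V = V^\rho \oplus W$ with $W$ the sum of the nontrivial isotypic components, invariance of the bilinear form forces $V^\rho \perp W$, so $W = (V^\rho)^\perp$; but the signature claim still requires the cocharacter argument above.
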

From the above, we may without any loss of generality suppose that $V^\rho = \{ 0 \}$ and work inside the Heegner cycle associated to $V^\rho$.

\begin{cons}
Denote by $A$ the algebra generated by $\rho(H(\bQ))$ inside $\End(V)$, denote by $E$ the center of this algebra.

The adjoint involution $\sigma$ associated  to $q$  on $\End(V)$ stabilizes both $A$ and $E$ and hence these are both algebras with involution.

Let $F=E^\sigma$ be the sub-algebra of $\sigma$-fixed elements and $B$ be the centralizer of $F$. The algebra $B$ is again an algebra with involution, its center is $F$.

Decompose $F=\oplus F_i$ into a product of fields.
The idempotents for the $F_i$ decompose $V = \oplus V_i$, $E=\oplus E_i$, $A=\oplus A_i$ and $B=\oplus B_i$.

Then each $V_i$ is an $F_i$, $E_i$, $A_i$ and $B_i$-module.
Moreover, we have that $B_i = \End_{F_i}(V_i)$ is an algebra with involution and hence there exists an $F_i$-valued quadratic form $q_i$ on $V_i$ inducing the involution $\sigma|_{B_i}$.
By uniqueness, and polarization we can rescale $q_i$ so that $q|_{V_i} = \Tr_{F_i/\bQ}(q_i)$.

Packaging these together we find:
 \[ q = \Tr_{F/\bQ}(\oplus_i q_i) \]
and that the inclusion of $H$ into $SO(V)$ factors through the inclusion:
\[ \Res_{F/\bQ}( \times_i \Orth(q_i)) \rightarrow \times_i \Orth( \Tr_{F_i/\bQ}(q_i) ). \]
\end{cons}

The following claim can be checked after base change to $\bR$.
\begin{claim}
The algebra $F$ must be totally real, and $F$ has exactly one factor for which $\Tr_{F_i/\bQ}(q_i)$ is indefinite.
\end{claim}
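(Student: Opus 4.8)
The plan is to verify both assertions after base change to $\bR$, the leverage being that the Deligne cocharacter attached to the Shimura datum has a one-dimensional positive-weight space on the standard representation. First I would decompose
\[ G_\bR := \Res_{F/\bQ}\bigl(\times_i\Orth(q_i)\bigr)_\bR \cong \prod_i\prod_{v\mid\infty}\Orth(q_i)_{F_{i,v}}, \]
the inner product running over the archimedean places $v$ of each $F_i$, so that $q\otimes\bR$ decomposes as a corresponding orthogonal sum of total signature $(2,n)$. At a real place $v$ the block is a real orthogonal group $\Orth(p_{i,v},r_{i,v})$ contributing $p_{i,v}$ to the positive index, while at a complex place $v$ the block is the complex orthogonal group $\Orth(d_i,\bC)$ (viewed as a real group, $d_i=\dim_{F_i}V_i$), whose real trace form $\Tr_{\bC/\bR}(q_i)_v$ has balanced signature $(d_i,d_i)$ and hence contributes $d_i\geq 1$ to the positive index.

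Next I would localize the positive index. Write $h\colon\bS\to H_\bR\hookrightarrow G_\bR$ for the point of the symmetric space and $\mu=h_\bC$ for its cocharacter. The standard representation of $\SO(2,n)$ carries a Hodge structure of K3 type, so the weight $+1$ eigenspace $V^{+1}_\bC$ is one-dimensional and $V^{-1}_\bC=\overline{V^{+1}_\bC}$. Since $\mu$ respects the block decomposition, $V^{+1}_\bC$ lies in a single block and $V^{-1}_\bC$ in the conjugate block; the one-dimensionality is precisely what forbids the non-compact part of $H$ from mapping diagonally into several blocks. On every block where $\mu$ acts trivially the map $h$ is trivial, which forces that block to be compact, hence negative definite, the entire positive index being concentrated in the block (or conjugate pair of blocks) where $\mu$ is non-trivial.

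To obtain that $F$ is totally real I would rule out complex places. A complex place of some $F_i$ contributes positive index $d_i\geq 1$; if it is not the hosting block, the previous paragraph forces it to be negative definite, a contradiction, so the only possible complex place is the one carrying $V^{\pm1}_\bC$. But the real group there is $\Orth(d_i,\bC)$, which is not among the admissible non-compact Hermitian factors $\SO(2,\ell)$, $\SU(1,\ell)$, $\UU(1,\ell)$ classified at the end of Section~\ref{sec:sym}; as the non-compact part of $H$ has non-trivial image in this hosting block, its symmetric space would then have to map non-trivially into a non-Hermitian one, which is impossible. Hence every place is real and $F$ is totally real.

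Finally, with $F$ totally real the hosting block corresponds to a single real place $v_0$ of a single factor $F_{i_0}$, and it carries the whole positive index $2$, so $\Tr_{F_{i_0}/\bQ}(q_{i_0})$ has signature $(2,r)$ with $r\geq 1$ (the map of symmetric spaces being non-trivial, since $V^\rho=\{0\}$), and is therefore indefinite; every other factor has vanishing positive index and is thus negative definite, in particular definite. This yields exactly one indefinite factor. I expect the main obstacle to be the bookkeeping around complex places: one must genuinely exclude that the unique block permitted to carry positive index is itself a complex place, and this is exactly the point where the Section~\ref{sec:sym} classification of admissible non-compact factors is indispensable.
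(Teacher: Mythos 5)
Your mechanism for localizing the positive index is correct and genuinely different from the paper's: you use the one-dimensionality of $V^{1,-1}$ in the K3-type Hodge structure to show that every block on which $\mu$ acts trivially lies in $V^{0,0}$, hence is negative definite, so the entire positive index sits in the single real block hosting $V^{\pm 1}$. The paper instead invokes its Section~\ref{sec:sym} classification to show $H(\bR)$ has non-compact image in at most one factor, plus a ``refinement of $F$'' argument via maximal compacts to rule out compact image in a non-compact factor; your weight-space argument buys a cleaner concentration statement, and your index bookkeeping disposes of a complex hosting block of $F_i$-dimension $1$ (signature $(1,1)$, but the host must carry index $2$) more directly than the paper's explicit computation with $\Res_{\bC/\bR}(\Orth(q_j))(\bR)=\{\pm1\}$.

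However, there is a genuine gap in your exclusion of the remaining complex case, where your own count forces $\dim_{F_i}V_i=2$ and the block group is $\Orth(2,\bC)$. Your contradiction rests on the assertion that ``the non-compact part of $H$ has non-trivial image in this hosting block,'' but this fails exactly in the CM case where $H(\bR)$ is compact (a torus; the paper explicitly notes special points as the one compact case that occurs): then there is no non-compact part, the Section~\ref{sec:sym} classification says nothing, and nothing in your argument prevents a compact torus from mapping into $\Orth(2,\bC)$ with $h$ non-trivial there. Moreover, even granting a non-compact part, ``a Hermitian symmetric space cannot map non-trivially into a non-Hermitian one'' is false in general (e.g.\ $\SU(1,1)\subset \SL_2(\bC)$ embeds the upper half-plane totally geodesically into hyperbolic $3$-space); the correct statement, which the paper uses, is that holomorphic maps of Hermitian domains factoring through $\{\pm1\}\ltimes\bC^\times$ --- whose symmetric space is a real line --- are discrete, and this only yields that $H(\bR)$ is compact, not a contradiction. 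The paper then finishes with the algebra-with-involution structure from its Construction: the compact image is the torus of a CM-algebra, and the algebra it generates inside $\End(V_j)$ has $\sigma$-fixed center totally real, so it cannot be $\bC\times\bC$ with the exchange involution, i.e.\ one cannot have $F_j=\bC$. Alternatively, you could close the case inside your own framework: a cocharacter of the complex orthogonal group has weights on the standard representation occurring in $\pm$ pairs, so together with the conjugate summand any $h$ non-trivial on a complex block forces $\dim V^{1,-1}\ge 2$, violating the K3 condition. As written, your proposal contains neither argument; relatedly, your parenthetical justification that $r\ge 1$ ``since the map of symmetric spaces is non-trivial'' also breaks down in the compact case.
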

\begin{proof}
Notice that the inclusion 
\[ \times_i \SO( \Tr_{F_i/\bQ}(q_i) )  \rightarrow \SO(q) \]
gives a map of symmetric spaces through which the inclusion of the Hermitian symmetric space for $H$ must factor.
It follows that each map $H(\bR) \rightarrow  \Orth( \Tr_{F_i/\bQ}(q_i) )(\bR)$ induces a map of symmetric spaces.

Thus, by the results of the previous section we can conclude that $H(\bR)$ has non-compact image in at most one of $\Orth( \Tr_{F_i/\bQ}(q_i) )(\bR)$. It follows that there is at most one (and hence exactly one) of $\Orth( \Tr_{F_i/\bQ}(q_i) )(\bR)$ which is non-compact, for if $H(\bR)$ had compact image in a non-compact $\Orth( \Tr_{F_i/\bQ}(q_i) )(\bR)$ this would induce a refinement of $F$ by virtue of the structure of the maximal compact subgroup of  $\Orth( \Tr_{F_i/\bQ}(q_i) )(\bR)$.

Next we must now rule out the possibility that this non-compact factor comes from a complex factor $F_j$ of $F$. Indeed the only possibilities for:
\[ \Res_{\bC/\bR}(\Orth(q_j)) \injects \Orth( \Tr_{\bC/\bR}(q_j) ) \]
which give quadratic forms with real rank at most $2$ are when $V_j$ has dimension $1$ or $2$ over $\bC$.

Suppose $V_j$ has dimension $1$ over $\bC=F_j$, so it has dimension $2$ over $\bR$. However, $H(\bR)$ factors through $\Res_{\bC/\bR}(\Orth(q_j))(\bR) = \{ \pm1 \} $. We can then readily check that regardless of whether the image of $H(\bR)$ is $\{1\}$ or $\{\pm1\}$ we will not have that $\bC$ is contained in the center of the algebra generated of the image.

In the second case, when $V_j$ has complex dimension $2$ over $\bC$, so it has dimension $4$ over $\bR$, we find that $\Res_{\bC/\bR}(\Orth(q_j))(\bR)  \simeq \{ \pm 1 \} \ltimes \bC^\times$ and the only maps of Hermitian symmetric domains which factor through this are discrete. It follows that $H(\bR)$ is compact and thus the image inside $\Orth( \Tr_{F_j/\bQ}(q_j) )(\bR)$ is the torus associated to a CM-algebra.
The algebra it generates inside $\End(V_j)$ can thus not be $\bC \times \bC$ with the exchange involution.

In particular, we have shown that $F_j\neq \bC$, so that there are no complex factors of $F$.
\end{proof}

We now consider two cases for the structure of the image of $H$ in the non-compact factor over $F_j$. The $2$ cases to consider are $E_j=F_j$, or $E_j$ a non-trivial $CM$-extension of $F_j$.

\begin{cons}
If $E_j$ is a non-trivial $CM$-extension of $F_j$.
In this case, the form $q_j$ on $V_j$ is the restriction of an $E_j$-valued Hermitian form on $V_j$, which we will denote $q_j'$.

If $E_j=F_j$, we find that through base change to $\bR$, we have:
\[ H(\bR) \rightarrow \SO(2,\ell)\times \SO(2+\ell) \times \cdots \times \SO(2+\ell) \]
and by the classification result of the first section, the projection onto the first factor $\SO(2,\ell)$ is surjective.

We conclude that $H = \Res_{F_j/\bQ}(\SO(q_j))$.

Similarly, in the second case we find that through base change to $\bR$, we have:
\[ H(\bR) \rightarrow U(1,\ell)\times U(1+\ell) \times \cdots \times U(1+\ell) \]
and by the classification result of the first section, the projection onto at least the $\SU(1,\ell)$ part of the first factor is surjective.

We conclude that $\Res_{F_j/\bQ}(\SU(q_j')) \subset H \subset \Res_{F_j/\bQ}(U(q_j'))$.
\end{cons}

We now consider the following modifications to our group $H$.

Consider the simply connected cover $\tilde{H}$ of $H$, decompose $\tilde{H} = \tilde{H}^{nc} \times \tilde{H}^c$ over $\bQ$ into a compact part and a non-compact part.
Denote by $H'$ the image in $H$ of $\tilde{H}^{nc}$.

\begin{claim}
Up to consideration of component groups the Shimura varietes associated to $H'$ and $H$ are isomorphic as are theier inclusions into the Shimura variety associated to $\SO(2,n)$.
\end{claim}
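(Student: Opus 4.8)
The plan is to reduce the statement to a comparison of the \emph{connected components} of the two Shimura varieties, since the phrase ``up to consideration of component groups'' means precisely that we are free to ignore the difference between the two $\pi_0$'s. Recall (see \cite{Milne_shimura}) that each connected component of the Shimura variety attached to a datum $(H,X_H)$ has the form $\Gamma\backslash X_H^+$, where $X_H^+$ is a connected component of the associated Hermitian symmetric domain and $\Gamma$ is the image in $H^{ad}(\bR)^+$ of a congruence subgroup; moreover a connected Shimura datum is governed entirely by the adjoint group $H^{ad}$ together with the congruence data coming from the simply connected cover $\tilde H$. The same description applies to $H'$. Thus it suffices to establish two things: that $H$ and $H'$ determine the same domain $X^+$, compatibly with the embedding into the domain of $\SO(2,n)$; and that they determine the same commensurability class of arithmetic groups $\Gamma$ acting on it.

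For the domain, I would argue that it only sees the non-compact part. By construction $\tilde H=\tilde H^{nc}\times\tilde H^c$ with $\tilde H^c(\bR)$ compact, so $\tilde H^c$ contributes only compact factors to $H(\bR)$ and therefore acts trivially on $X_H^+$. Consequently the composite $\bS\to H_\bR\to H^{ad}_\bR$ factors, modulo these invisible compact factors, through the image of $\tilde H^{nc}$, that is through $H'^{ad}_\bR$; the resulting morphism defines the same Hermitian symmetric domain $X^+$. Since $H'$ is by definition the image of $\tilde H^{nc}$ in $H$, the embedding $X_{H'}^+\hookrightarrow X^+$ into the $\SO(2,n)$-domain is induced by $\rho|_{H'}$ and hence coincides with the embedding $X_H^+\hookrightarrow X^+$ induced by $\rho$.

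For the arithmetic groups, I would exploit that the decomposition $\tilde H=\tilde H^{nc}\times\tilde H^c$ is a direct product over $\bQ$. Fixing compatible integral structures, an arithmetic subgroup $\Gamma$ of $\tilde H(\bQ)$ is commensurable with a product $\Gamma^{nc}\times\Gamma^c$ of arithmetic subgroups of the two factors. Because $\tilde H^c(\bR)$ is compact, its image in $\Aut(X^+)=H^{ad}(\bR)^+$ is trivial, so the image of $\Gamma$ coincides with the image of $\Gamma^{nc}$; the latter is exactly the arithmetic group cutting out the connected component of the Shimura variety attached to $H'$ (whose simply connected cover is $\tilde H^{nc}$). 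Hence $\Gamma\backslash X^+=\Gamma^{nc}\backslash X^+$, the connected components of the two Shimura varieties agree, and they do so compatibly with their maps into the Shimura variety of $\SO(2,n)$.

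The step I expect to require the most care is the interplay between the central isogeny $\tilde H\to H^{der}$ and the passage to arithmetic subgroups: one must verify that replacing $H$ by $H'$ alters the relevant congruence subgroups only by finite index, so that each individual connected component is genuinely unchanged, and that the only genuine discrepancy lives in $\pi_0$, which is controlled by the abelianization $H/H^{der}$ and is exactly the information we agree to discard. The remaining points --- the equality of domains and the triviality of the compact factor's action on $X^+$ --- are formal once the $\bQ$-rational product decomposition of $\tilde H$ is in hand.
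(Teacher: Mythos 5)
The paper offers no proof of this claim at all --- it is asserted bare and immediately used (``if we replace $H$ by $H'$, and repeat all of the above constructions\dots'') --- so your proposal can only be measured against the standard argument the author evidently has in mind, and that is essentially the argument you give: the compact $\bQ$-factor $\tilde H^c$ acts trivially on the Hermitian symmetric domain, so it can only affect the component set and finite-index data of the arithmetic quotients, and since $H'\subset H\subset \SO(V)$ the embedding of domains for $H'$ is literally the restriction of that for $H$. Your reduction to connected Shimura data, the use of the $\bQ$-rational direct product $\tilde H=\tilde H^{nc}\times\tilde H^c$ (which holds because a simply connected semisimple group is the direct product of its $\bQ$-simple factors --- this is why the author passes to $\tilde H$ at all), and the observation that congruence subgroups of a direct product are commensurable with products of congruence subgroups, are all correct and fill in what the paper leaves implicit.

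Three precision points, none fatal but worth fixing. First, $\Aut(X^+)$ is not $H^{ad}(\bR)^+$ but its quotient by the compact factors; the image of $\tilde H^c(\bR)$ in $H^{ad}(\bR)^+$ need not be trivial --- what is true is that it acts trivially on $X^+$, which is all you use. Relatedly, the morphism $\bS\to H^{ad}_\bR$ may project nontrivially to the compact adjoint factor; this does not matter for $X^+$, but you should not claim it factors through $H'^{ad}_\bR$ on the nose. Second, your sentence asserting that ``each individual connected component is genuinely unchanged'' overreaches: at a \emph{fixed} level the lattice coming from $H(\bQ)\cap K$ and the one coming from $H'(\bQ)\cap K$ are in general only commensurable, so the components agree only up to finite covers; the clean statement is that the two towers of congruence quotients of $X^+$ are cofinal, i.e.\ the connected (pro-)Shimura varieties agree --- and this finite-level wiggle is precisely part of what the paper's phrase ``up to consideration of component groups'' is being used to absorb. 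Third, the congruence data of a connected Shimura variety come from $H^{der}$ (Deligne's formalism), not from $\tilde H$; because of the congruence subgroup problem the classes of arithmetic groups obtained from the two covers can genuinely differ, though again only by finite index, so this too stays within the claim's stated tolerance. With these adjustments your argument is a sound and complete justification of a claim the paper takes on faith.
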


Because of the structure of $H'$, it admits no maps into $\bQ$-compact factors  $\Res_{\bC/\bR}(\Orth(q_j)) $.
In particular if we replace $H$ by $H'$, and repeat all of the above constructions, we obtain the same map of Shimura varieties (up to consideration of component groups) and we may assume that $F$ is a field.

The following theorem is now an immediate consequence of the various proceeding claims and propositions.
\begin{thm}\label{thm:mainresult}
Up to consideration of component groups, the Sub-Shimura varieties of $O(2,n)$ type Shimura varieties are of the following sort:

They arise from the restriction of scalars from a totally real field $F$ of 
\begin{itemize}
\item the Shimura variety attached to a quadratic form $q'$ which has signature $(2,\ell)$ at one place, and is negative definite at all other places.
\item the Shimura variety attached to a Hermitian form $q'$ associated to $CM$-extension $E$ of $F$ which has signature $(1,\ell)$ at one place of $F$, and is negative definite at all other places.
\end{itemize}
The only condition on the existence of these sub-Shimura varieties is that we can write:
\[ q = \Tr_{F/\bQ}(q')  \oplus q^{\rho} \qquad \text{or} \qquad q = \Tr_{E/\bQ}(q')  \oplus q^{\rho}. \]
\end{thm}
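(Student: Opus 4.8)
The plan is to assemble the chain of reductions established in the preceding constructions and claims into the two assertions of the theorem: the characterization of the shape of $H$ (necessity) and the statement that the decomposition $q = \Tr_{F/\bQ}(q') \oplus q^\rho$ is the only constraint on existence (sufficiency).

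For the necessity direction I would start from an arbitrary $\rho : H \hookrightarrow \SO(V,q)$ inducing a map of Shimura varieties and run the reductions in order. First, the opening Lemma splits off the fixed space, writing $V = V^\rho \oplus (V^\rho)^\perp$ with $V^\rho$ of signature $(0,m)$; setting $q^\rho = q|_{V^\rho}$ accounts for the final summand in the statement and lets me assume $V^\rho = \{0\}$. Next, the Construction produces the algebra-with-involution data $(A,E,\sigma,F,B)$ together with the factorization of $\rho$ through $\Res_{F/\bQ}(\times_i \Orth(q_i)) \to \times_i \Orth(\Tr_{F_i/\bQ}(q_i))$ and the identity $q = \Tr_{F/\bQ}(\oplus_i q_i)$. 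The Claim then forces $F$ to be totally real with exactly one factor $F_j$ for which $\Tr_{F_j/\bQ}(q_j)$ is indefinite. Finally, passing from $H$ to the image $H'$ of the non-compact part of its simply connected cover---which by the associated Claim changes neither the Shimura variety nor its inclusion up to component groups---removes the compact factors indexed by $i \neq j$, so that I may take $F = F_j$ to be a field.

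At this point the final two-case Construction applies verbatim. When $E_j = F_j$ it yields $H = \Res_{F/\bQ}(\SO(q'))$ with $q' = q_j$, and when $E_j$ is a CM-extension of $F_j$ it yields $\Res_{F/\bQ}(\SU(q')) \subset H \subset \Res_{F/\bQ}(\UU(q'))$ with $q'$ the $E_j$-Hermitian form $q_j'$. Base-changing to $\bR$ and matching the resulting signatures against the total signature $(2,n)$ then pins down the real places of $F$: in the orthogonal case exactly one place carries signature $(2,\ell)$ and the rest are negative definite, while in the Hermitian case exactly one place carries Hermitian signature $(1,\ell)$---whose real trace form contributes $(2,2\ell)$---and the rest are negative definite. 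This is precisely the dichotomy in the statement, and I would read off the existence condition on $q$ directly from the two trace identities $q = \Tr_{F/\bQ}(q') \oplus q^\rho$ and $q = \Tr_{E/\bQ}(q') \oplus q^\rho$.

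For the sufficiency (existence) direction I would argue the converse: given such $F$, $E$, $q'$ and $q^\rho$ with the stated local signatures, the group $\Res_{F/\bQ}(\SO(q'))$, respectively any group between $\Res_{F/\bQ}(\SU(q'))$ and $\Res_{F/\bQ}(\UU(q'))$, carries a Shimura datum whose associated real symmetric space is one of those identified in Section~\ref{sec:sym}, and whenever $q$ decomposes as above the orthogonal inclusion of forms induces the desired embedding. The main obstacle I anticipate lies in the bookkeeping hidden in the phrase ``up to consideration of component groups'': one must verify that each reduction---especially the replacement of $H$ by $H'$ and the isogenies relating $\SU$, $\UU$ and the orthogonal/Hermitian pictures---is harmless at the level of connected Shimura varieties, and that the Deligne cocharacter is transported correctly through the orthogonal decomposition of $V$, so that the map produced in the existence direction is genuinely a morphism of Shimura data and not merely of the underlying real symmetric spaces.
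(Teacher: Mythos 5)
Your proposal is correct and follows essentially the same route as the paper: the paper's proof of Theorem~\ref{thm:mainresult} is precisely the assembly of the fixed-space Lemma, the algebra-with-involution Construction, the totally-real Claim, the two-case Construction, and the $H'$ reduction, in the same way you have laid out (the paper itself states the theorem is ``an immediate consequence of the various proceeding claims and propositions''). Your explicit spelling-out of the sufficiency direction and of the signature bookkeeping at real places is a faithful elaboration of what the paper leaves implicit, not a different argument.
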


\begin{rmk}
The isomorphism class of the cycles described above depends on the choice of $F$ (respectively $E$) and $q'$, the isomorphism class of these (together with that of $q$) will determine that of $q^{\rho}$. Such a $q^{\rho}$ need not exist for any given pair $F,q'$ (respectively $E,q'$)  when the codimension is small.
Moreover, the isomorphism class need not determine the rational conjugacy class.

The ideas needed to study these questions are entirely different from those we are using here, and thus the questions are outside the scope of the current paper. We refer the interested reader to several places where work on this has been done.

Heegner cycles (and generalized Heegner cycles) correspond to taking $E=\bQ$. Various features of this case are studied in the work of \cite{KudlaCycles}.

Special points correspond to taking $E$ a $CM$-field and $q'$ a one dimensional quadratic space. A more thorough study of this case was undertaken in \cite{Fiori1}. Note that this is the only case we consider where the group $H(\bR)$ is compact.

The authors Thesis \cite{Fiori_PHD} and the preprint \cite{Fiori3} includes some discussion of how to compute the invariants of these restrictions of trace forms.
\end{rmk}

\begin{rmk}
We note that the Shimura varieties considered above (both the cycles we are considering and the variety in which we are embedding them) are generally disconnected. 

We note however, that by composition of maps every sub-Shimura variety of $\GSpin(2,n)$ leads to a sub-Shimura variety for $\SO(2,n)$ and conversely, if
$H\rightarrow \SO(2,n)$ gives a sub-Shimura variety  for $\SO(2,n)$ then $H\times_{\SO(2,n)} \GSpin(2,n) \rightarrow \GSpin(2,n) $ gives a sub-Shimura variety for $\GSpin(2,n)$.
Up to possibly subtle considerations of the component groups, we obtain essentially the same sub-Shimura varieties in each case.
Again, a study of these component groups, though a very interesting question both in terms of the action of the Galois group on them and even the simpler question of the possible sizes, requires entirely different techniques and is outside the scope of this paper.
\end{rmk}

\section{Restrictions of Scalars Shimura Varieties}
\label{sec:Res}

The following two theorems are essentially corollaries of Theorem \ref{thm:mainresult}, the proofs are left to the reader.

\begin{thm}
Let $F$ be a totally real field, let $q$ be an $F$ valued quadratic form which has signature $(2,\ell)$ at one place, and is negative definite at all other places.
The sub-Shimura varieties of the associated Shimura variety are of the following sort:

They arise from the restriction of scalars from a totally real field $F'$ which is an extension of $F$ of 
\begin{itemize}
\item the Shimura variety attached to a quadratic form $q'$ which has signature $(2,\ell)$ at one place, and is negative definite at all other places.
\item the Shimura variety attached to a Hermitian form $q'$ associated to $CM$-extension $E$ of $F'$ which has signature $(1,\ell)$ at one place of $F'$, and is negative definite at all other places.
\end{itemize}
The only condition on the existence of these sub-Shimura varieties is that we can write:
\[ q = \Tr_{F'/F}(q')  \oplus q^{\rho} \qquad \text{or} \qquad q = \Tr_{E/F}(q')  \oplus q^{\rho}. \]

The isomorphism class of such a cycle depends on the choice of $F'$ (respectively $E$) and $q'$; the isomorphism class of these with that of $q$ will determine that of $q^{\rho}$. When the codimension is small there is no guarentee that given $q$ and a pair $F',q'$ (respectively $E,q'$) that any form $q^\rho$ can actually be found.
\end{thm}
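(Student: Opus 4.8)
The plan is to re-run the argument of Section~\ref{sec:Shim} verbatim, with the base field $\bQ$ replaced throughout by $F$. Concretely, the Shimura variety attached to $q$ is the one associated to $G = \Res_{F/\bQ}\SO(q)$, and since $q$ is negative definite at every real place of $F$ except $v_0$, where it has signature $(2,\ell)$, we have $G(\bR) = \SO(2,\ell) \times \prod_{v \ne v_0} \SO(2+\ell)$, so the associated Hermitian symmetric domain is exactly the $\SO(2,\ell)$ one. A sub-Shimura variety is given by an inclusion $\rho : H \injects G$ over $\bQ$ inducing a map of symmetric spaces. The only genuinely analytic input into Section~\ref{sec:Shim} is the real classification of Section~\ref{sec:sym}, and this input remains available one real place of $F$ at a time: for each $v \mid \infty$ the projection of $H(\bR)$ to the factor $\SO(q_v)(\bR)$ induces a map of symmetric spaces, to which the results of Section~\ref{sec:sym} apply directly.

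First I would carry out the Construction relative to $F$. Let $V$ be the $F$-vector space underlying $q$, let $A \subseteq \End_F(V)$ be the $F$-algebra generated by $\rho(H(\bQ))$, let $E$ be its center, let $\sigma$ be the adjoint involution of $q$ on $\End_F(V)$, and set $F' = E^\sigma$ and $B = C_A(F')$. Because $A$ is by construction an $F$-algebra it contains the central scalars $F \cdot \id$, so $F \subseteq E$; and since $q$ is $F$-valued the $F$-scalars are self-adjoint, so $\sigma$ fixes $F$ and hence $F \subseteq F'$. This is the point that makes the result a restriction of scalars from an extension $F'/F$: nothing in the Construction or in the first Claim of Section~\ref{sec:Shim} uses that the ground field is $\bQ$ rather than $F$, so we obtain as before a decomposition $F' = \oplus_i F'_i$, an orthogonal splitting $V = \oplus_i V_i$ with $F'_i$-valued forms $q_i$, and a factorization of $\rho$ through $\Res_{F'/F}(\times_i \Orth(q_i)) \to \times_i \Orth(\Tr_{F'_i/F}(q_i))$.

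Next I would reprove the Claim and the final Construction over $F$. Applying the real classification place-by-place as above shows that $H(\bR)$ has non-compact image in at most one real factor; the argument ruling out a complex factor of the center goes through unchanged, so each $F'_i$ is totally real over $\bQ$ and exactly one factor $F'_j$ carries the non-compact part. Splitting into the two cases $E_j = F'_j$ and $E_j/F'_j$ a $CM$-extension, and passing to the image of the non-compact part of the simply connected cover exactly as in the second Claim so as to discard compact factors and reduce $F'$ to a field, I would conclude that up to isogeny $H$ is $\Res_{F'/F}\SO(q')$ or lies between $\Res_{F'/F}\SU(q')$ and $\Res_{F'/F}\UU(q')$, with $q'$ of signature $(2,r)$ (resp. a Hermitian form for $E/F'$ of signature $(1,r)$) at one place of $F'$ and negative definite elsewhere. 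The fixed space $V^\rho$ is an $F$-rational, $H$-stable subspace, so its orthogonal complement is $F$-rational and $q^\rho = q|_{V^\rho}$ is an honest $F$-valued form; this yields $q = \Tr_{F'/F}(q') \oplus q^\rho$ (resp. $q = \Tr_{E/F}(q') \oplus q^\rho$), as claimed. Compatibility with Theorem~\ref{thm:mainresult} is the transitivity $\Res_{F'/\bQ} = \Res_{F/\bQ} \circ \Res_{F'/F}$, valid precisely because $F \subseteq F'$.

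The hard part is not any single step but the bookkeeping that the entire machinery of Section~\ref{sec:Shim} is insensitive to replacing $\bQ$ by $F$; the one place where this could fail is the real analysis underlying the first Claim, and here one must be careful that after base change $\Res_{F/\bQ}\SO(q)$ becomes a genuine product of real orthogonal groups indexed by the real places of $F$, so that the branching results of Section~\ref{sec:sym} apply factor by factor. The closing assertion about isometry classes is then formal: given $q$ and a pair $(F',q')$ (resp. $(E,q')$), Witt cancellation shows $q^\rho$ is determined up to isometry by $q$ and $\Tr_{F'/F}(q')$ when it exists, while existence can fail in small codimension since there need be no $F$-form of the correct dimension and invariants complementary to the given trace form inside $q$.
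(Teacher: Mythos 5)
Your proposal is correct, but it takes a different route from the one the paper intends: the paper states these results as ``essentially corollaries of Theorem \ref{thm:mainresult}'' with the proof left to the reader, i.e.\ the intended argument is to compose $\rho$ with the embedding $\Res_{F/\bQ}\SO(q) \injects \SO(\Tr_{F/\bQ}(q))$ (which is a map of Shimura data, since the trace form has signature $(2, \ell + (\ell+2)([F:\bQ]-1))$ over $\bQ$), apply the main theorem over $\bQ$ to produce $F''$ and $q'$, and then verify after the fact that $F \subseteq F''$ and that the splitting $q = \Tr_{F''/F}(q') \oplus q^{\rho}$ is $F$-rational. That after-the-fact verification is not entirely free: $F$ lies in the commutant of the $\bQ$-algebra generated by $\rho(H(\bQ))$ because the action is $F$-linear, but $F$ lies in the \emph{center} only once the final identification $H \simeq \Res_{F''/\bQ}\SO(q')$ (resp.\ the unitary analogue) pins down that commutant as $F''$ (resp.\ $E$, whence $F \subseteq E^{\sigma} = F''$ since $\sigma$ fixes $F$) via the double centralizer theorem. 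You instead re-run the whole Section \ref{sec:Shim} machinery relative to $F$, generating $A$ as an $F$-subalgebra of $\End_F(V)$, which makes $F \subseteq E^{\sigma} = F'$ and the $F$-rationality of $V^{\rho}$ and of the orthogonal decomposition automatic from the start; the price is the bookkeeping you correctly flag, namely that the Construction, the transfer of the involution to $F'_i$-valued forms $q_i$ with $q|_{V_i} = \Tr_{F'_i/F}(q_i)$, and the first Claim are insensitive to replacing $\bQ$ by $F$, with the real input of Section \ref{sec:sym} applied one real place of $F$ at a time (your place-by-place reduction is exactly right, and at the definite places a complex factor of $F' \otimes_F F_v$ is even excluded outright, since $\Tr_{\bC/\bR}$ of a rank-$d$ complex quadratic form has signature $(d,d)$ and so can never be negative definite). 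Both routes are sound; the corollary route is shorter given Theorem \ref{thm:mainresult} as a black box, while yours is self-contained, avoids the commutant argument, and has the advantage of generalizing verbatim to the Hermitian base case of the second theorem in Section \ref{sec:Res}. Your closing use of Witt cancellation for the uniqueness of $q^{\rho}$, and of transitivity $\Res_{F'/\bQ} = \Res_{F/\bQ} \circ \Res_{F'/F}$ for compatibility, are both correct.
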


\begin{thm}
Let $E$ be a CM-field, let $q$ be a Hermitian form which has signature $(1,q)$ at one place, and is negative definite at all other places.
The sub-Shimura varieties of the associated Shimura variety are of the following sort:

They arise from the restriction of scalars from a CM-field $E'$, an extension of $E$, of 
 the Shimura variety associated to a Hermitian form $q'$ which has signature $(1,\ell)$ at one place of $E'$, and is negative definite at all other places.

The only condition on the existence of these sub-Shimura varieties is that we can write:
\[ q = \Tr_{E'/E}(q')  \oplus q^{\rho}. \]

The isomorphism class of such a cycle depends on the choice of $E'$ and $q'$; the isomorphism class of these with that of $q$ will determine that of $q^{\rho}$. Given $q$, $E'$ and $q'$ such a form $q^{\rho}$ need not exist when the codimension is small.
\end{thm}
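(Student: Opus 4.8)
The plan is to deduce this from Theorem \ref{thm:mainresult} by realizing the ambient unitary Shimura variety as a sub-Shimura variety of an orthogonal one of type $(2,n)$, and then identifying which of the cycles produced by the main theorem factor through it. Write $E^+$ for the maximal totally real subfield of $E$ and let $G=\Res_{E^+/\bQ}\UU(q)$ be the group whose Shimura variety is the ambient one, so that $G(\bR)$ has single non-compact factor $\UU(1,\ell)$. Since $q$ has signature $(1,\ell)$ at the distinguished real place of $E^+$ and is negative definite at the remaining places, the $\bQ$-quadratic form $Q=\Tr_{E/\bQ}(q)$ has signature $(2,n)$, the distinguished place contributing a $(2,2\ell)$ block and each other place a negative definite block. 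The tautological inclusion $G\injects\SO(Q)$ is one of the maps appearing in Theorem \ref{thm:mainresult} (the $\UU(1,\ell)$ case), inducing the standard totally geodesic embedding of the complex ball $\bB^\ell$ into the type IV domain. Consequently every sub-Shimura variety $H$ of the Shimura variety for $G$ is, a fortiori, a sub-Shimura variety $H\injects G\injects\SO(Q)$ of the orthogonal one, and the main theorem applies.

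Next I would impose the constraint of factoring through $G$. By Theorem \ref{thm:mainresult}, up to component groups $H$ arises by restriction of scalars of an orthogonal datum over a totally real field or of a unitary datum over a CM field, with $Q=\Tr(q')\oplus q^\rho$. The additional information is that $\rho(H)$ is $E$-linear, i.e. $E$ lies in the centralizer of the algebra $A$ generated by $\rho(H(\bQ))$; equivalently $A\subseteq\End_E(V)$. Running the Construction preceding the main theorem, the center $E'$ of $A$ is precisely the field over which the restriction of scalars is taken. Because $E$ centralizes $A$, and in the relevant Zariski-dense unitary cases $A$ equals its own bicommutant so that its centralizer collapses onto its center $E'$, the double centralizer theorem yields $E\subseteq E'$. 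Since $E$ is CM it is totally imaginary, hence $E'$ cannot be totally real; this rules out the orthogonal alternative and forces $E'$ to be a CM extension of $E$ with $H$ of unitary type. The fact that the only non-compact factor of $G(\bR)$ is $\UU(1,\ell)$, whose domain is a complex ball all of whose totally geodesic Hermitian subdomains are sub-balls, confines the non-compact directions of $q'$ to lie over the distinguished place, so that $q'$ has signature $(1,\ell')$ there and is negative definite elsewhere.

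Finally I would descend the trace decomposition from $\bQ$ to $E$ and record the converse. Transitivity $\Tr_{E'/\bQ}=\Tr_{E/\bQ}\circ\Tr_{E'/E}$ together with the compatibility of the adjoint involution $\sigma$ with complex conjugation on the CM tower lets me rewrite $Q=\Tr_{E'/\bQ}(q')\oplus q^\rho$ as an identity of $E$-Hermitian forms $q=\Tr_{E'/E}(q')\oplus q^{\rho}$, which is the asserted condition. For the converse, given any CM extension $E'/E$ and any $E'$-Hermitian $q'$ of signature $(1,\ell')$ at the distinguished place, negative definite elsewhere, with $q=\Tr_{E'/E}(q')\oplus q^{\rho}$, the group $\Res_{(E')^+/\bQ}\UU(q')$ maps to $G$ via the induced ball-in-ball embedding, and this is a morphism of Shimura data because the signature conditions make the Hodge cocharacters correspond; this produces the cycle. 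The statements that $q^\rho$ is determined by $q,E',q'$ and may fail to exist in small codimension follow from Witt cancellation and the fact that a negative definite complement of prescribed rank need not be realizable, exactly as in Theorem \ref{thm:mainresult}.

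I expect the main obstacle to be the containment $E\subseteq E'$: one must verify that the $E$-linearity of $\rho(H)$ genuinely places $E$ inside the center of the generated algebra rather than merely in a larger commutant, which rests on the surjectivity established in Section \ref{sec:sym} forcing the generated algebra to be its own bicommutant so that its centralizer reduces to the center $E'$.
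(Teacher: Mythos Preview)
Your approach is correct and is exactly what the paper intends: the paper states this theorem (together with its companion for totally real $F$) as an essentially immediate corollary of Theorem \ref{thm:mainresult} and explicitly leaves the proof to the reader, so there is no detailed argument in the paper to compare against. Your route---embed the unitary Shimura datum into the orthogonal one via $Q=\Tr_{E/\bQ}(q)$, apply the main theorem, and then use $E$-linearity of $\rho(H)$ to force $E\subseteq E'$ and hence rule out the totally real (orthogonal) alternative---is a clean way to carry this out.

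One small tightening: the step ``$A$ equals its own bicommutant so its centralizer collapses to its center'' is not quite the right justification. The bicommutant identity $A''=A$ does not by itself give $A'=Z(A)$. What you actually need (and what you correctly flag at the end) is that the surjectivity of $H'$ onto $\Res_{F'/\bQ}\SU(q')$ forces the generated algebra to be all of $\End_{E'}(V')$; since $V'$ is then a simple $A$-module, $\End_A(V')=E'$, and hence $E\subseteq E'$. With that adjustment the argument goes through, including the observation that $V^\rho$ and its orthogonal complement are $E$-stable (the $q$-orthogonal and $Q$-orthogonal complements agree by nondegeneracy of the trace pairing), so $E$-linearity persists through the reductions.
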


\section{Further Questions}
\label{sec:Conc}

For $\SU(p,q)$ with $p>1$ and $\SO^\ast(n)$ with $n>4$, the idea of using branching rules and rank as was done here are adaptable to this setting. However, the number of representations that must be considered grows considerably; it is likely a more detailed study of the representations would be necessary and it is not immediately clear what results one can even hope for.

A careful inspection of the component group considerations which we have ignored would be worthwhile. The techniques we have applied here are not well suited for studying the effect of non-$\bQ$-simple compact factors. In particular all representations of compact groups can map into a sufficiently large orthogonal group.

As discussed in the paper, determining the precise local/global conditions under which a cycle of a given rational isomorphism class will embed into the Shimura variety with a specified rational isomorphism class is worthy of some study, as is the more refined question of describing precisely the rational (or even integral) conjugacy classes of these embeddings.

\section*{Acknowledgments}

The author would like to thank Prof. Eyal Goren for suggesting the problem studied in this paper. They would also like to thank Majid Shahabi for their help in proofreading this article.

{}\ifx\XMetaCompile\undefined

\providecommand{\MR}[1]{}
\providecommand{\bysame}{\leavevmode\hbox to3em{\hrulefill}\thinspace}
\providecommand{\MR}{\relax\ifhmode\unskip\space\fi MR }
\providecommand{\MRhref}[2]{  \href{http://www.ams.org/mathscinet-getitem?mr=#1}{#2}
}
\providecommand{\href}[2]{#2}

\end{document}
{}\fi